\documentclass[12pt]{article}

\usepackage{fullpage,amssymb,amsthm,amsfonts,enumerate,textcomp, eurosym, titling, epsfig,epstopdf, authblk,pinlabel,xcolor,caption, graphicx, tikz-cd, xcolor}
\usepackage{subcaption}

\usepackage{amsmath}
\makeatletter
\def\grd@save@target#1{%
  \def\grd@target{#1}} \def\grd@save@start#1{%
  \def\grd@start{#1}}

\begin{document}

\makeatother

\title{Turaev--Viro invariants and cabling operations}
\author{Sanjay Kumar and Joseph M. Melby}
\date{\vspace{-5ex}}

\newcommand{\Addresses}{{
  \bigskip
  \footnotesize

  \textsc{Department of Mathematics, University of California, Santa Barbara, Santa Barbara,  CA,
  93106-6105, USA}\par\nopagebreak \textit{E-mail address}: \texttt{sanjay\_kumar@ucsb.edu}\\
 
  \textsc{Department of Mathematics, Michigan State University, East Lansing, MI, 48824,
    USA}\par\nopagebreak \textit{E-mail address}: \texttt{melbyjos@msu.edu}

}}

\maketitle


\begin{abstract}
In this paper, we study the variation of the Turaev--Viro invariants for $3$-manifolds with toroidal
boundary under the operation of attaching a $(p,q)$-cable space. We apply our results to a
conjecture of Chen and Yang which relates the asymptotics of the Turaev--Viro invariants to the
simplicial volume of a compact oriented $3$-manifold. For $p$ and $q$ coprime, we show that the
Chen--Yang volume conjecture is stable under $\left(p,q\right)$-cabling. We achieve our results
by studying the linear operator $RT_r$ associated to the torus knot cable spaces  by the
Reshetikhin--Turaev $SO_3$-Topological Quantum Field Theory (TQFT), where  the TQFT is well-known to
be closely related to the desired Turaev--Viro invariants. In particular, our utilized method relies
on the invertibility of the linear operator for which we provide necessary and sufficient conditions.
\end{abstract}

\newtheorem{innercustomgeneric}{\customgenericname}
\providecommand{\customgenericname}{}
\newcommand{\newcustomtheorem}[2]{%
  \newenvironment{#1}[1]
  {%
   \renewcommand\customgenericname{#2}%
   \renewcommand\theinnercustomgeneric{##1}%
   \innercustomgeneric
  } {\endinnercustomgeneric} }

\newcustomtheorem{customthm}{Theorem}
\newcustomtheorem{customlemma}{Lemma}
\newcustomtheorem{customprop}{Proposition}
\newcustomtheorem{customconjecture}{Conjecture}
\newcustomtheorem{customcor}{Corollary}

\theoremstyle{plain}
\newtheorem*{ack*}{Acknowledgements}
\newtheorem{thm}{Theorem}[section]
\newtheorem{lem}[thm]{Lemma}
\newtheorem{prop}[thm]{Proposition}
\newtheorem{cor}[thm]{Corollary}
\newtheorem{predefinition}[thm]{Definition}
\newtheorem{conjecture}[thm]{Conjecture}
\newtheorem{preremark}[thm]{Remark}
\newenvironment{remark}%
  {\begin{preremark}\upshape}{\end{preremark}} \newenvironment{definition}%
  {\begin{predefinition}\upshape}{\end{predefinition}}

\newtheorem{ex}[thm]{Example}
\newtheorem{ques}[thm]{Question}



\section{Introduction}\label{IntroSec} For a compact 3-manifold $M$, its Turaev--Viro invariants are
a family of $\mathbb{R}$-valued homeomorphism invariants parameterized by an integer $r\geq 3$
depending on a $2r$-th root of unity $q$. We are primarily interested in the invariants when $r$ is
odd and $q = e^{\frac{2\pi \sqrt{-1}}{r}}$. 

In this paper, we study the variation of the Turaev--Viro invariants of a $3$-manifold with toroidal
boundary when we attach a $(p,q)$-cable space.

\begin{definition}
Let $V$ be the standardly embedded solid torus in $S^3$, and let $V'$ be a closed neighborhood of
$V$. For $p$ and $q$ coprime integers with $q>0$, let $T_{p,q} \subset \partial V$ be the torus knot of
slope $p/q$. The $(p,q)$-cable space, denoted $C_{p,q}$, is the complement of the torus knot
$T_{p,q}$ in $V'$. Let $M$ be a $3$-manifold with toroidal boundary. A manifold $M'$ obtained from
gluing a $(p,q)$-cable space $C_{p,q}$ to a boundary component of $M$ along the exterior toroidal
boundary component of $C_{p,q}$ is called a $(p,q)$-cable of $M$.
\end{definition}

Our main theorem is the following.

\begin{thm}\label{mainthm} Let $M$ be a manifold with toroidal boundary, let $p,q$ be coprime
integers with $q>0$, and let $r \geq 3$ be an odd integer coprime to $q$. Suppose $M'$ is a
$(p,q)$-cable of $M$. Then there exists a constant $C>0$ and natural number $N$ such that
\[
\frac{1}{Cr^N}TV_r(M) \leq TV_r(M') \leq Cr^N TV_r(M).
\]
\end{thm}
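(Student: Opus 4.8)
The approach is to reformulate everything in terms of the Reshetikhin--Turaev TQFT and to reduce Theorem~\ref{mainthm} to a norm bound on a single linear map attached to the cable space. By the theorem of Roberts and of Benedetti--Petronio identifying the Turaev--Viro TQFT with the square of the Reshetikhin--Turaev TQFT (in the form used by Detcherry--Kalfagianni--Yang for manifolds with boundary), for every compact oriented $3$-manifold $N$ with toroidal boundary there is a normalization factor $\lambda_r>0$, bounded above and below by fixed powers of $r$ and depending only on the number $b$ of boundary tori of $N$, with
\[
TV_r(N) \;=\; \lambda_r \sum_{\mathbf{c}} \bigl| RT_r(N;\mathbf{c}) \bigr|^2 \;=\; \lambda_r\,\|J(N)\|_{\ell^2}^2 ,
\]
where $\mathbf{c}$ runs over all admissible colorings of $\partial N$ and $J(N)=\bigl(RT_r(N;\mathbf{c})\bigr)_{\mathbf{c}}$ is the vector of colored Reshetikhin--Turaev invariants of $N$ (equivalently, colored Jones-type invariants when $N$ is a link exterior).

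Next I would record the effect of the cabling on $J$. Write $\partial M = T^2_1\sqcup\cdots\sqcup T^2_b$ with $C_{p,q}$ glued along $T^2_b$, so that $\partial M' = T^2_1\sqcup\cdots\sqcup T^2_{b-1}\sqcup\partial N(T_{p,q})$ again has $b$ toroidal components. Gluing a solid torus colored by the $m$-th Jones--Wenzl idempotent onto $\partial N(T_{p,q})$ in the meridional framing refills $T_{p,q}$ inside the enlarged solid torus $V'$, so $RT_r(M';\mathbf{c}',m)$ equals the Reshetikhin--Turaev evaluation of $M$ with $V'$ reglued along $T^2_b$ and the $(p,q)$-torus curve colored by the $m$-th Jones--Wenzl idempotent drawn inside $V'$. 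Expanding that skein element in the colored-core basis of the skein module of $V'$ produces universal coefficients $c^{(p,q)}_{m,k}$, independent of $M$, with
\[
RT_r(M';\mathbf{c}',m) \;=\; \sum_{k} c^{(p,q)}_{m,k}\, RT_r(M;\mathbf{c}',k),
\]
that is, $J(M') = \bigl(\mathrm{id}\otimes\cdots\otimes\mathrm{id}\otimes C^{(p,q)}\bigr)J(M)$ for the cabling matrix $C^{(p,q)}=\bigl(c^{(p,q)}_{m,k}\bigr)$ acting on the $T^2_b$ tensor factor (changing the framing on the new boundary only composes $C^{(p,q)}$ with a diagonal unitary, which is harmless below). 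Since $M$ and $M'$ have the same number of boundary tori, the same $\lambda_r$ occurs for both, and since tensoring with identities preserves $\ell^2$ operator norms, the theorem reduces to a linear-algebraic claim: there are $C_0>0$ and $N_0\in\mathbb{N}$, depending on $p$ and $q$ only, such that $C^{(p,q)}$ is invertible with $\|C^{(p,q)}\|_{\mathrm{op}}\le C_0 r^{N_0}$ and $\|(C^{(p,q)})^{-1}\|_{\mathrm{op}}\le C_0 r^{N_0}$; the theorem then follows with $C$ and $N$ chosen as suitable expressions in $C_0$, $N_0$ and the bounds on $\lambda_r$.

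The heart of the argument is the explicit analysis of $C^{(p,q)}$ via the Rosso--Jones formula for torus knots (equivalently, Morton's cabling formula), which presents $c^{(p,q)}_{m,k}$ as a product of: a framing anomaly equal to a $pq$-th power of a twist root of unity, of modulus $1$; entries of the modular $S$- and $T$-matrices corresponding to an element of $SL_2(\mathbb{Z})$ carrying slope $p/q$ to $\infty$; and a ``$q$-fold fusion'' contribution, in which the color index is reindexed essentially by multiplication by $q$ modulo $r$, with the usual reflection identifications and sign changes on the admissible set. From this one reads off the upper bound: every entry is bounded by a fixed power of $r$ (roots of unity have modulus $1$, entries of $S$ are $O(r^{-1/2})$, and the quantum integers that appear lie, because $r>q+6$, in a range where they are bounded away from $0$ and above by $O(r)$), so $\|C^{(p,q)}\|_{\mathrm{op}}$ is at most $\tfrac{r-1}{2}$ times the largest entry, hence polynomial in $r$. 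More importantly, one obtains invertibility: when $\gcd(r,q)=1$ the map $k\mapsto qk\bmod r$ is a bijection of the admissible colors, the quantum integers and roots of unity accompanying it do not vanish, and --- using $r>q+6$ to keep every index that arises admissible and to prevent any color from collapsing --- $C^{(p,q)}$ is, up to the unitary modular matrices, a permutation composed with a diagonal matrix whose entries are bounded away from $0$ and above by a fixed power of $r$. Its inverse has the same shape, so $\|(C^{(p,q)})^{-1}\|_{\mathrm{op}}$ is polynomial in $r$ too.

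The main obstacle is precisely this last step: making the cabling matrix explicit enough to isolate its ``$q$-fold'' part, controlling the level-$r$ truncation of colors together with the framing anomalies, and verifying that the resulting operator is an invertible permutation-times-bounded-diagonal map exactly when $\gcd(r,q)=1$ and $r>q+6$ --- which is where both hypotheses of Theorem~\ref{mainthm} get used. The reduction $TV_r=\lambda_r\|J(\cdot)\|_{\ell^2}^2$, the identification of the cabling operator from the gluing axiom, and the forward norm bound are, by contrast, routine.
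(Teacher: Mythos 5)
Your overall strategy mirrors the paper's: reduce the theorem to invertibility and an operator norm bound for the map $RT_r(C_{p,q}): RT_r(T^2)\to RT_r(T^2)$, using Theorem \ref{TVandRT} and the TQFT gluing axiom to pass between $TV_r$ and $\|RT_r\|$. Your upper bound is also fine (the paper gets it slightly differently, from $TV_r(C_{p,q})\leq Cr^N$ for Seifert-fibered spaces via Theorem \ref{TVProperties}, but entry-wise bounds plus Cauchy--Schwarz would do as well). The gap is entirely in the lower bound, which is where the paper's real work lies.

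The key claim in your proposal --- that the Rosso--Jones/Morton cabling formula exhibits $RT_r(C_{p,q})$, up to conjugation by the modular $S$- and $T$-matrices, as a permutation $k\mapsto qk\bmod r$ composed with a diagonal matrix --- is not established and, as written, does not hold. Morton's formula (Theorem \ref{CablingFormula}) sends $e_i$ to a sum over the set $S_i=\{-\tfrac{i-1}{2},\dots,\tfrac{i-1}{2}\}$, which has $i$ terms: each column of the matrix is a genuine linear combination of $\sim i$ basis vectors, not a scalar multiple of a single one. Conjugating by $S$ and a power of $T$ does not collapse these sums to a single term (this is precisely why the Chen--Yang volume conjecture for torus knots is not a one-line computation). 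The paper resolves this by introducing a bespoke basis $\tilde F_m=\{\tilde f_l\}$, with $f_0=e_1$ and $f_l=e_{ql+1}-A^{2pl}e_{ql-1}$, reduced under the skein symmetries; in that basis $RT_r(C_{p,q})$ becomes (diagonal unitary)$\times$(explicit $0$--$1$ upper-unitriangular checkered matrix)$\times$(diagonal unitary), which is visibly invertible with polynomially bounded inverse. Proving that $\tilde F_m$ actually spans $RT_r(T^2)$ when $\gcd(r,q)=1$ and $r>q+6$ is the heart of the matter: the paper shows (Lemmas \ref{MatLem1}--\ref{MatLem2}) that the change-of-basis matrix $R_m$ has at most two nonzero entries in every row and column, all roots of unity, with a structure that supports an explicit cofactor-expansion argument for $\det R_m\ne 0$; this is where both hypotheses $\gcd(r,q)=1$ and $r>q+6$ enter, via the extended Euclidean algorithm and a case analysis on $q\bmod 4$. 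Your permutation-times-diagonal picture skips exactly this analysis. Unless you can actually exhibit the unitaries and permutation you invoke (I do not believe they exist in the form you describe), you need an argument of the paper's kind to establish invertibility and the norm bound; the ``routine'' framing at the end of your proposal understates this --- it is the entire content of Theorem \ref{RTinvertible}.
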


Theorem \ref{mainthm} has notable applications to existing conjectures. In general, the Turaev--Viro
invariants are difficult to compute; however, there is interest in the relationship between their
$r$-asymptotic behavior and classical invariants of 3-manifolds. Chen and Yang \cite{chen2018volume}
conjectured that the growth rate of the Turaev--Viro invariants for hyperbolic manifolds is related to the
manifold's hyperbolic volume. They also provided computational evidence for the conjecture in
\cite{chen2018volume}. 

This conjecture should be compared to the well-known conjectures of Kashaev \cite{KashaevVolConj}
and Murakami-Murakami \cite{MurakamiMurakamiVolConj} relating the Kashaev and colored Jones
invariants of hyperbolic link complements to their hyperbolic volumes. Detcherry and Kalfagianni
\cite{detcherry2019gromov} restated the Turaev--Viro invariant volume conjecture more generally in
terms of the simplicial volume for manifolds which are not necessarily hyperbolic. In order to state
the conjecture, we will first introduce a slightly weaker condition for the growth rate of the
Turaev--Viro invariants.

\begin{definition}\label{Def:Lim} Define the following two asymptotics of the Turaev--Viro
invariants for compact 3-manifolds as
\begin{align*}
\textit{lTV}(M) := \liminf_{r\rightarrow \infty, \text{ } r \text{ odd}} \frac{2\pi}{r} &\log \left|\text{TV}_r \left(M; q = e^{\frac{2\pi i}{r}}\right)\right|,
\end{align*}
and
\begin{align*}
\textit{LTV}(M) := \limsup_{r\rightarrow \infty, \text{ } r \text{ odd}} \frac{2\pi}{r} &\log \left|\text{TV}_r \left(M; q = e^{\frac{2\pi i}{r}}\right)\right|.
\end{align*}
\end{definition}

Additionally, we will introduce the simplicial volume for compact orientable $3$-manifolds with
empty or toroidal boundary, originally defined by Gromov \cite{Gromov}. For $M$ a compact,
orientable, irreducible $3$-manifold, there is a unique collection of incompressible tori, up to
isotopy, along which $M$ can be decomposed into atoroidal manifolds. This is known as the JSJ
decomposition \cite{JacoShalen,Johannson}. By Thurston's Geometrization Conjecture
\cite{ThurstonGeomConj}, famously completed in the work of Perelman
\cite{PerelmanEntropy,PerelmanFinite,PerelmanRicci}, each of these atoroidal manifolds are either
hyperbolic or Seifert-fibered, and by Thurston \cite{ThurstonGT3manifolds}, the simplicial volume of
$M$ coincides with the sum of the simplicial volumes of the resulting pieces. In the case where $M$
is hyperbolic, the simplicial volume is positive and is related to the hyperbolic volume of $M$ by 
$$vol(M)=v_3 \|M\|$$ where $vol(M)$ is the hyperbolic volume of $M$, $v_3 \approx 1.0149$ is the
volume of the regular ideal tetrahedron, and $\|M\|$ is the simplicial volume of $M$. 

This leads to a natural extension of Chen and Yang's Turaev--Viro invariant volume conjecture
\cite{chen2018volume}.
\begin{conjecture}[\cite{chen2018volume}, \cite{detcherry2019gromov}]\label{GenCYvol} Let $M$ be a
compact oriented 3-manifold. Then
\[
\text{LTV}(M) = v_3||M||,
\]
where $v_3$ is the volume of the regular ideal tetrahedron and $\|\cdot\|$ is the simplicial volume. 
\end{conjecture}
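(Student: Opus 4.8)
The statement to be established is Conjecture \ref{GenCYvol}; as it stands this is an open problem, so what I can offer is the shape of a proof together with an honest account of where it breaks down. The plan is to reduce to geometric pieces. First I would dispose of the reducible case: using the behavior of $\mathrm{LTV}$ under connected sum and under capping off sphere boundary components, together with Gromov's additivity of $\|\cdot\|$ under connected sum, the problem reduces to prime $M$. For irreducible $M$ with empty or toroidal boundary I would invoke the JSJ decomposition to write $M$ as a union, along incompressible tori, of hyperbolic and Seifert-fibered pieces $M_1,\dots,M_k$, and recall Thurston's theorem that $\|M\| = \sum_i \|M_i\|$, with the Seifert pieces contributing zero. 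The conjecture would then follow from two facts: (i) a \emph{gluing estimate} $\mathrm{lTV}(M) \geq \sum_i \mathrm{lTV}(M_i)$ and $\mathrm{LTV}(M) \leq \sum_i \mathrm{LTV}(M_i)$ along incompressible tori; and (ii) the per-piece statements $\mathrm{LTV} = \mathrm{lTV} = 0$ for Seifert-fibered, and more generally graph, manifolds, and $\mathrm{LTV}(N) = \mathrm{lTV}(N) = v_3\|N\|$ for hyperbolic $N$. Theorem \ref{mainthm} of this paper is precisely an instance of the gluing estimate (i) in the case where the attached piece is a $(p,q)$-cable space: it shows $\mathrm{TV}_r(M')$ and $\mathrm{TV}_r(M)$ differ by at most a polynomial factor, and since the cable space is Seifert-fibered with zero simplicial volume, this yields Conjecture \ref{GenCYvol} for $(p,q)$-cables whenever it holds for $M$.

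The vanishing statement for graph manifolds in (ii) I would attack directly: from a plumbing or surgery presentation, $\mathrm{TV}_r$ is a sum of boundedly many products of quantum integers and $6j$-symbols, each bounded in modulus by a fixed polynomial in $r$ at $q = e^{2\pi i/r}$, so the total is $O(r^N)$ and contributes $0$ to both $\mathrm{lTV}$ and $\mathrm{LTV}$, matching $v_3\|\cdot\| = 0$; this is already known in several families. The upper bound $\mathrm{LTV}(N) \leq v_3\|N\|$ (indeed for all $M$) I would try to derive from the same kind of crude estimate applied to an ideal triangulation with $t$ tetrahedra, producing a bound of the form $\mathrm{TV}_r = O\bigl((\mathrm{const}\cdot r^{c})^{t}\bigr)$; combined with $v_3\|N\|$ being bounded by $v_3$ times the minimal number of ideal tetrahedra, one hopes to match exponential rates, but making the constant sharp enough that the rate is exactly $v_3\|N\|$ rather than something larger is already delicate and is itself part of the open problem.

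The genuine obstacle is the lower bound $\mathrm{lTV}(N) \geq v_3\|N\|$ for hyperbolic $N$, which is exactly the hard direction of the original Chen--Yang conjecture \cite{chen2018volume}. The standard route is to use the Benedetti--Petronio / Detcherry--Kalfagianni--Yang identity $\mathrm{TV}_r(N) = \bigl\|\mathrm{RT}_r(N \cup \bar N)\bigr\|$ expressing $\mathrm{TV}_r$ in terms of the Reshetikhin--Turaev invariant of the double, and then — when $N$ is a link complement in $S^3$ — to relate $\mathrm{RT}_r$ to values of the colored Jones polynomials at $q = e^{2\pi i/r}$ and run a saddle-point / potential-function analysis of the resulting sums of quantum $6j$-symbols, whose leading exponential growth is governed by the volume of a hyperbolic ideal octahedron (for fundamental shadow links) or, more generally, by the hyperbolic volume via a Neumann--Zagier type potential. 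Pushing this through for an \emph{arbitrary} hyperbolic $N$ — locating the dominant term, ruling out cancellation, uniformly in $r$ — is open; at present the conjecture is known only for manifolds admitting an explicit state-sum formula, namely fundamental shadow link complements and manifolds obtained from them by Dehn filling, drilling, and, via Theorem \ref{mainthm}, cabling. A complete proof of Conjecture \ref{GenCYvol} therefore requires either such an explicit analysis in full generality or a fundamentally new input; absent that, the role of Theorem \ref{mainthm} is to enlarge the class of manifolds for which the conjecture is verified to include all $(p,q)$-cables of such manifolds.
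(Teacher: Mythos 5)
You have correctly identified that the statement you were asked to prove is a \emph{conjecture}: the paper offers no proof of Conjecture \ref{GenCYvol}, and it remains open, so there is no argument in the paper against which to compare yours. Your refusal to manufacture a proof is the right call, and your account of the paper's actual role is accurate: Theorem \ref{mainthm} and Corollaries \ref{Weakequivlim}--\ref{Cor:2n} establish only a closure property (the class of manifolds satisfying the conjecture, with the full limit in place of the limsup, is closed under $(p,q)$-cabling), which is exactly how you describe it. Your identification of the genuine obstruction --- the lower bound $\mathrm{lTV}(N)\geq v_3\|N\|$ for general hyperbolic $N$ --- matches the state of the art reflected in the paper's introduction.

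Two small cautions on the parts of your sketch you present as more tractable. First, for Seifert-fibered pieces the paper's Theorem \ref{TVProperties} gives only $\mathrm{LTV}\leq 0$; the matching lower bound $\mathrm{lTV}=0$ requires ruling out exponential decay of $TV_r$ and is not automatic, so even step (ii) for the zero-volume pieces is not fully in hand. Second, the superadditivity half of your gluing estimate (i), $\mathrm{lTV}(M)\geq\sum_i\mathrm{lTV}(M_i)$ along incompressible tori, is not known in general --- the known inequality from Theorem \ref{TVProperties} goes the other way --- and Theorem \ref{mainthm} supplies it only when the attached piece is a cable space, precisely by inverting $RT_r(C_{p,q})$. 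Also, the identity you cite is $TV_r(M)=\|RT_r(M)\|^2$ (Theorem \ref{TVandRT}), a squared norm on $RT_r(\partial M)$, rather than an invariant of the double; this does not affect your argument but is worth stating correctly.
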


If Conjecture \ref{GenCYvol} is true, since the simplicial volume is additive under the JSJ
decomposition, it would imply that the asymptotics of the Turaev--Viro invariants are also additive
under the decomposition. As an application of Theorem \ref{mainthm}, we provide evidence of the
additivity for the asymptotics of the Turaev--Viro invariants. 

The asymptotic additivity property has been explored in several works. For a manifold $M$ which
satisfies Conjecture \ref{GenCYvol}, the property was proven for invertible cablings of $M$ by
Detcherry and Kalfagianni \cite{detcherry2019gromov}, the figure-eight knot cabled with Whitehead
chains by Wong \cite{wong2020WHFig8}, and an infinite family of manifolds with arbitrarily large
simplicial volume by the authors of this paper \cite{kumarMelby21}. Additionally, the property was
proven to hold under the operation of attaching a $(p, 2)$-cable space by Detcherry
\cite{DetcherryCabling}, which we extend in this work. 

A key property of the $(p,q)$-cable spaces is that they have simplicial volume zero. Theorem
\ref{mainthm} provides a way to construct new manifolds without changing the simplicial volume while
controlling the growth of the Turaev--Viro invariants. This leads to many examples of manifolds
satisfying Conjecture \ref{GenCYvol}. 

To the authors' knowledge, in all of the proven examples of Conjecture \ref{GenCYvol}, the stronger
condition that the limit approaches the simplicial volume is verified, as opposed to  only the limit
superior. Theorem \ref{mainthm} implies the following corollaries. See Section \ref{Sec: BoundingCables} for more details.

\begin{cor}\label{Weakequivlim} Suppose $M$ satisfies Conjecture \ref{GenCYvol} and $lTV(M) =
v_3||M||$. Then for any $p$ and $q$ coprime, any $(p,q)$-cable $M'$ also satisfies Conjecture
\ref{GenCYvol}.
\end{cor}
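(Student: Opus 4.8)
The plan is to combine Theorem \ref{mainthm} with the definitions of $lTV$ and $LTV$ and the fact that $(p,q)$-cable spaces have simplicial volume zero. First I would observe that the two-sided bound in Theorem \ref{mainthm} holds for all sufficiently large odd $r$ coprime to $q$, and that taking logarithms turns the multiplicative error $Cr^N$ into an additive error of the form $\log C + N\log r$. After multiplying through by $\frac{2\pi}{r}$, this error term is $\frac{2\pi}{r}(\log C + N\log r)$, which tends to $0$ as $r\to\infty$. Hence for the relevant subsequence of odd integers coprime to $q$ we get
\[
\left|\frac{2\pi}{r}\log\left|TV_r(M')\right| - \frac{2\pi}{r}\log\left|TV_r(M)\right|\right| \leq \frac{2\pi}{r}\bigl(\log C + N\log r\bigr) \xrightarrow[r\to\infty]{} 0.
\]

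The second step is to deal with the coprimality restriction on $r$. The definitions of $lTV$ and $LTV$ take liminf and limsup over \emph{all} odd $r$, whereas Theorem \ref{mainthm} only compares $TV_r(M')$ and $TV_r(M)$ for odd $r$ coprime to $q$. I would handle this by passing to the subsequence $S_q$ of odd integers coprime to $q$: on $S_q$ the displayed estimate above shows that $\frac{2\pi}{r}\log|TV_r(M')|$ and $\frac{2\pi}{r}\log|TV_r(M)|$ have the same liminf and the same limsup. Since $M$ is assumed to satisfy $lTV(M)=LTV(M)=v_3\|M\|$ (the full limit over all odd $r$ exists and equals $v_3\|M\|$), restricting to the subsequence $S_q$ does not change this value; therefore the liminf and limsup of $\frac{2\pi}{r}\log|TV_r(M')|$ along $S_q$ both equal $v_3\|M\|$. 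It remains to upgrade this to a statement about all odd $r$, which is exactly where the hypothesis $lTV(M)=v_3\|M\|$ (not merely $LTV(M)=v_3\|M\|$) is used.

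The third step identifies $v_3\|M\|$ with $v_3\|M'\|$. Since $C_{p,q}$ is a graph manifold (in fact Seifert fibered), it has simplicial volume zero, and simplicial volume is additive under gluing along incompressible tori; the gluing torus here is incompressible because it is the JSJ-type torus separating the cable space from $M$. Hence $\|M'\| = \|M\| + \|C_{p,q}\| = \|M\|$, so $v_3\|M'\| = v_3\|M\|$. Combining: along $S_q$, both $\liminf$ and $\limsup$ of $\frac{2\pi}{r}\log|TV_r(M')|$ equal $v_3\|M'\|$. The main obstacle — and the only genuinely delicate point — is confirming that one can drop the coprimality constraint to recover $LTV(M') = v_3\|M'\|$ over all odd $r$. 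The resolution is that the general upper bound $LTV(N)\leq v_3\|N\|$ is known to hold for all compact oriented $3$-manifolds (this is the content of the corresponding inequality in \cite{detcherry2019gromov}), so $LTV(M')\leq v_3\|M'\| = v_3\|M\|$, while the subsequential liminf computed above gives $lTV(M')\geq v_3\|M\|$; together with $lTV(M')\leq LTV(M')$ this pins down $lTV(M') = LTV(M') = v_3\|M'\|$, which in particular yields Conjecture \ref{GenCYvol} for $M'$. I would close by remarking that the argument in fact proves the stronger conclusion $lTV(M') = v_3\|M'\|$, not merely the limsup statement.
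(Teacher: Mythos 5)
Your argument contains two genuine errors, and together they cause you to claim a conclusion (that $lTV(M') = v_3\|M'\|$) which is \emph{not} provable by this method and which the paper explicitly leaves open except when $q=2^n$ (Corollary \ref{Cor:2n}).

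First, you assert that the upper bound $LTV(N)\leq v_3\|N\|$ is ``known to hold for all compact oriented $3$-manifolds'' and attribute it to \cite{detcherry2019gromov}. This is false. That inequality is essentially one half of Conjecture \ref{GenCYvol}; it is not a theorem. What \cite{detcherry2019gromov} proves (Theorem \ref{TVProperties} here) is much weaker: $LTV\leq 0$ for Seifert manifolds, and subadditivity of $LTV$ under gluing along tori and under Dehn filling. To get the needed upper bound on $LTV(M')$ one must use exactly those two facts: since $M'=C_{p,q}\cup_T M$ with $C_{p,q}$ Seifert fibered, Theorem \ref{TVProperties} gives $LTV(M')\leq LTV(M)+LTV(C_{p,q})\leq LTV(M)+0 = v_3\|M\|$. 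You cannot shortcut this by invoking a general upper bound that does not exist.

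Second, you claim ``the subsequential liminf computed above gives $lTV(M')\geq v_3\|M\|$.'' The inequality is backwards. For any real sequence, the liminf along a subsequence is \emph{greater than or equal to} the liminf of the full sequence, so from $\liminf_{r\in S_q}\frac{2\pi}{r}\log|TV_r(M')| = v_3\|M\|$ one may only conclude $lTV(M')\leq v_3\|M\|$, which is useless here. What \emph{is} valid is the corresponding statement for the limsup: the limsup over the full sequence is at least the limsup over the subsequence, so $LTV(M')\geq \limsup_{r\in S_q}\frac{2\pi}{r}\log|TV_r(M')|=v_3\|M\|$. Combining this with the corrected upper bound above yields $LTV(M')=v_3\|M\|=v_3\|M'\|$, which is exactly Corollary \ref{Weakequivlim}, but nothing more. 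In particular, $lTV(M')$ is not controlled for odd $r$ sharing a factor with $q$, and the stronger claim $lTV(M')=LTV(M')$ that you assert at the end is precisely what the coprimality restriction in Theorem \ref{mainthm} prevents you from proving here; the paper recovers it only when $q=2^n$, where every odd $r$ is automatically coprime to $q$.
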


Some examples which satisfy the hypothesis of Corollary \ref{Weakequivlim} include the figure-eight
knot and the Borromean rings by Detcherry-Kalfagianni-Yang \cite{colJvolDKY}, the Whitehead chains
by Wong \cite{WongWhitehead}, the fundamental shadow links by Belletti-Detcherry-Kalfagianni-Yang
\cite{growth6j}, a family of hyperbolic links in $S^2\times S^1$ by Belletti \cite{Belletti2021}, a
large family of octahedral links in $S^3$ by the first author of this paper \cite{Kumar21}, and a
family of link complements in trivial $S^1$-bundles over oriented connected closed surfaces by the
authors of this paper \cite{kumarMelby21}.

For general $p$ and $q$ coprime, Corollary \ref{Weakequivlim} demonstrates the stability of Conjecture \ref{GenCYvol} under the $(p,q)$-cabling operation.
However, in the case when $q=2^n$
for $n\in\mathbb{N}$, we recover the full limit as shown in the following corollary. 

\begin{cor}\label{Cor:2n} Suppose $M$ satisfies Conjecture \ref{GenCYvol}. Then for any odd $p$ and
$n \in \mathbb{N}$, any $(p,2^n)$-cable $M'$ also satisfies Conjecture \ref{GenCYvol}. Moreover, if
$lTV(M) = v_3||M||$, then $lTV(M') =LTV(M') = v_3 ||M'||$.
\end{cor}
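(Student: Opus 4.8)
The strategy is to combine Theorem~\ref{mainthm} with the additivity of the simplicial volume, using crucially that for $q=2^{n}$ the coprimality hypothesis of Theorem~\ref{mainthm} places no restriction on the odd integers $r$ that appear in the definitions of $lTV$ and $LTV$. First I would record the behavior of the simplicial volume: the $(p,2^{n})$-cable space $C_{p,2^{n}}$ is Seifert-fibered, so $\|C_{p,2^{n}}\|=0$, and its exterior boundary torus $T$ is incompressible in $C_{p,2^{n}}$; since $M'=M\cup_{T}C_{p,2^{n}}$, the additivity of the simplicial volume under gluing along tori (recalled in the Introduction through the JSJ decomposition) gives
\[
\|M'\|=\|M\|+\|C_{p,2^{n}}\|=\|M\|.
\]

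Next I would apply Theorem~\ref{mainthm} with $q=2^{n}$. Since every odd integer is coprime to $2^{n}$, for each odd $r>2^{n}+6$ the hypotheses of Theorem~\ref{mainthm} are met, producing constants $C>0$ and $N\in\mathbb{N}$, independent of $r$, with
\[
\frac{1}{Cr^{N}}\,TV_r(M)\ \le\ TV_r(M')\ \le\ Cr^{N}\,TV_r(M).
\]
Combining the two inequalities (and using $TV_r\ge 0$), $TV_r(M)$ and $TV_r(M')$ are positive for exactly the same $r$, and applying $\tfrac{2\pi}{r}\log(\cdot)$ to the displayed inequalities yields
\[
\Bigl|\,\tfrac{2\pi}{r}\log TV_r(M')-\tfrac{2\pi}{r}\log TV_r(M)\,\Bigr|\ \le\ \tfrac{2\pi}{r}\log\!\bigl(Cr^{N}\bigr),
\]
whose right-hand side tends to $0$ as $r\to\infty$ (at any index $r$ where $TV_r(M)$, and hence $TV_r(M')$, vanishes, both logarithmic terms equal $-\infty$ and so contribute identically to the limits below). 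Taking $\limsup$ and $\liminf$ over odd $r$ (which, unlike in Corollary~\ref{Weakequivlim}, is exactly the index set on which the estimate is available) we conclude $LTV(M')=LTV(M)$ and $lTV(M')=lTV(M)$.

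Finally I would assemble the conclusions. Since $M$ satisfies Conjecture~\ref{GenCYvol}, $LTV(M')=LTV(M)=v_{3}\|M\|=v_{3}\|M'\|$, so $M'$ satisfies Conjecture~\ref{GenCYvol}; and if in addition $lTV(M)=v_{3}\|M\|$, then $lTV(M')=lTV(M)=v_{3}\|M\|=v_{3}\|M'\|=LTV(M')$, which is the claimed identity $lTV(M')=LTV(M')=v_{3}\|M'\|$. The argument is largely bookkeeping once Theorem~\ref{mainthm} is available; the one genuinely delicate point, and exactly what allows this corollary to upgrade Corollary~\ref{Weakequivlim} to the full limit, is the matching of index sets: Theorem~\ref{mainthm} controls the ratio $TV_r(M')/TV_r(M)$ only for $r$ coprime to $q$, whereas $lTV$ and $LTV$ are limits over all odd $r$, and the choice $q=2^{n}$ is precisely what makes the two conditions coincide. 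A secondary, routine point is the identity $\|M'\|=\|M\|$, which relies on the standard Seifert-fibered structure and boundary incompressibility of cable spaces.
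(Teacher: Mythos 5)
Your proof is correct and follows essentially the same route as the paper's: you exploit that every odd $r$ is automatically coprime to $2^{n}$ so Theorem~\ref{mainthm} applies along the full sequence of odd $r$, take logarithms to turn the polynomial bounds into asymptotically vanishing corrections, and conclude using $\|M'\|=\|M\|$. The paper's version is terser (it asserts $\|M\|=\|M'\|$ and the implications $LTV(M')=LTV(M)$, $lTV(M')=lTV(M)$ without the bookkeeping you spell out), but the logic is the same.
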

As a direct result of  Corollaries \ref{Weakequivlim} and \ref{Cor:2n}, we extend the work of
Detcherry \cite{DetcherryCabling}, where the author considers the operation of attaching a
$(p,2)$-cable space. This allows us to construct manifolds satisfying Conjecture \ref{GenCYvol} from
manifolds with toroidal boundary where the conjecture is already known. This includes all previously
mentioned examples. 

The general method of proof for Theorem \ref{mainthm} follows from the work of Detcherry
\cite{DetcherryCabling}. Considering the cable space $C_{p,q}$ as a cobordism between tori, the
Reshetikhin--Turaev $SO_3$-TQFT at level $r$, denoted by $RT_r$, associates to it a linear operator
$$RT_r(C_{p,q}): RT_r(T^2) \rightarrow RT_r(T^2).$$

For $p$ odd and $q=2$, Detcherry presents $RT_r(C_{p,2})$ using the basis $\{e_1,e_3,\dots,
e_{2m-1}\}$, which is equivalent to the orthonormal basis $\{e_1,e_2,\dots, e_{m}\}$ for $RT_r(T^2)$
given in \cite{BHMVKauffman} under the symmetry $e_{m-i} = e_{m+i+1}$ for $0 \leq i\leq m-1$. More
details of the construction are given in Section \ref{Sec:Preliminaries}. With this basis,
$RT_r(C_{p,2})$ can be presented as a product of two diagonal matrices and one triangular matrix.
This allows the author to directly write the inverse of $RT_r(C_{p,2})$. From the inverse of this linear operator, Detcherry establishes a lower bound of the Turaev--Viro invariants under
attaching a $(p,2)$-cable space. 

For general $q$, $RT_r(C_{p,q})$ does not have as simple a presentation under the same basis, making
it more difficult to conclude that $RT_r(C_{p,q})$ is invertible. In order to resolve this, we
present $RT_r(C_{p,q})$ using a different basis for $RT_r(T^2)$, defined in Section \ref{Sec:
TechLemmaPfs}, that allows us to also show directly that  $RT_r(C_{p,q})$ is invertible provided $r$
and $q$ are coprime. Following Detcherry's argument, the invertibility of $RT_r(C_{p,q})$ is
integral in finding the lower bound from Theorem \ref{mainthm}; however, the invertibility of
$RT_r(C_{p,q})$ is constrained by the condition that $r$ and $q$ are coprime, as outlined by Theorem
\ref{RTinvertible}.

\begin{thm}\label{RTinvertible} Let $p$ be coprime to some positive integer $q$. Then
    $RT_r(C_{p,q})$ is invertible if and only if $r$ and $q$ are coprime. Moreover, the operator norm $|||RT_r(C_{p,q})^{-1}|||$ grows at most polynomially.
\end{thm}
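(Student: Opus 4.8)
The plan is to compute the matrix of $RT_r(C_{p,q})$ explicitly in a carefully chosen basis of $RT_r(T^2)$ and then to read off the invertibility dichotomy and the norm estimate from the resulting normal form. Recall that $r$ is odd and that $RT_r(T^2)$ carries the standard ``colored--core'' basis $\{e_i\}$ coming from the skein module of the solid torus, on which the mapping class group of $T^2$ acts through the $S$-- and $T$--matrices and the meridian--cabling operators act diagonally. The cable space $C_{p,q}$ is Seifert fibered over an annulus with one exceptional fiber of order $q$; as a cobordism from $T^2$ to $T^2$ it can be assembled from $T^2\times[0,1]$ together with the $(p,q)$--cabling pattern on the core of the solid torus. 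The first step is to use this to factor $RT_r(C_{p,q})$ as a product of finitely many diagonal ``twist'' operators (powers of $T$, hence diagonal with roots--of--unity entries in a suitable basis) and one ``cabling'' operator $\Phi_{p,q}$, whose matrix entries are the $(p,q)$--cabling coefficients supplied by a form of the Rosso--Jones formula: signs and roots of unity times ratios of quantum integers $[k]_r=\sin(\pi k/r)/\sin(\pi/r)$.

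The second step is to pass to the basis from Section~\ref{Sec: TechLemmaPfs}, which is designed so that $\Phi_{p,q}$, and therefore $RT_r(C_{p,q})$, becomes block diagonal with each block of size bounded in terms of $q$ alone (the blocks being grouped by the residue class of the color modulo a fixed multiple of $q$). This is where the hypotheses $\gcd(p,q)=1$ and $r>q+6$ enter, the latter ensuring that the colors occurring in the cabling sums stay in the admissible range and avoid the degenerate label. In this form the determinant of each block is a monomial in roots of unity times a ratio of quantum integers, so $RT_r(C_{p,q})$ is invertible if and only if none of the quantum integers occurring in these ratios vanishes. Since $[k]_r=0$ exactly when $r\mid k$, the only possible obstruction is an admissible color $n$ with $0<n<r$ and $r\mid nq$; such an $n$ exists precisely when $\gcd(r,q)>1$. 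This proves the equivalence, and in the non-coprime case it pinpoints the offending basis vector (or a short combination of basis vectors within a single block) as an explicit kernel element, which gives the ``only if'' direction concretely.

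For the norm bound, assume $\gcd(r,q)=1$. Then $|||RT_r(C_{p,q})^{-1}|||$ equals the maximum over blocks of $|||B^{-1}|||$. Each block $B$ has size $O(q)$, so $B^{-1}$ is computed by Cramer's rule as adjugate over determinant: the determinant is a product of $O(q)$ quantum integers, each of modulus at least $c/r$ because $\gcd(r,q)=1$ forces the relevant indices $k\not\equiv 0\pmod r$ and hence $|\sin(\pi k/r)|\geq \sin(\pi/r)\geq 2/r$, while each adjugate entry is a sum of $O(q)$--fold products of entries of $B$, each of modulus $O(r)$. Therefore $|||B^{-1}|||=O(r^{N})$ for some $N$ depending only on $q$, and the same bound holds for $|||RT_r(C_{p,q})^{-1}|||$.

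The step I expect to be the main obstacle is the second one: producing the basis in which $RT_r(C_{p,q})$ block--diagonalizes and, crucially, verifying that the blocks have size bounded purely in terms of $q$ rather than growing with $r$. Mere entrywise size bounds on $RT_r(C_{p,q})$ do not suffice to control $|||RT_r(C_{p,q})^{-1}|||$ -- an $O(r)\times O(r)$ triangular matrix with entries of size $O(r)$ and diagonal entries of size $\sim 1/r$ can have inverse of exponential norm -- so the polynomial estimate genuinely relies on the fine block structure. Establishing that structure is exactly where general $q$ is harder than the case $q=2$ handled by Detcherry, in which the cabling matrix was already triangular with respect to a classical basis.
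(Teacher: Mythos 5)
Your high-level plan (pick a good basis, establish a normal form for $RT_r(C_{p,q})$, deduce invertibility and the norm bound from that structure) is in the spirit of the paper, and you are right that the $q=2$ case was easy because the matrix was already triangular. But the normal form you propose, and the mechanism you propose for non-invertibility, are both wrong, and these are not cosmetic issues.

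First, there is no block-diagonal decomposition into blocks of size bounded by $q$. The cabling formula (Morton's, Theorem~\ref{CablingFormula}) gives
$RT_r(C_{p,q})(e_i) = A^{pq(i^2-1)/2}\sum_{k\in S_i}A^{-2pk(qk+1)}e_{2qk+1}$ with $S_i=\{-\tfrac{i-1}{2},\dots,\tfrac{i-1}{2}\}$,
so the number of terms grows linearly with $i$, and after applying the $e$-basis symmetries the resulting indices $2qk+1\bmod r$ scatter across $\{1,\dots,m\}$. The support of $RT_r(C_{p,q})(e_i)$ is not confined to a residue class modulo a multiple of $q$, and its size is not $O(q)$. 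The paper's normal form is instead $RT_r(C_{p,q})=D_1\,U\,D_2$ in a basis $\tilde F_m$ built from $f_0=e_1$, $f_l=e_{ql+1}-C_le_{ql-1}$: here $D_1,D_2$ are diagonal isometries (root-of-unity entries) and $U$ is a fixed upper-triangular matrix with entries $0$ and $1$. That $U$ has an explicit inverse with $O(r^2)$ entries of modulus $\leq 1$ is what yields the polynomial bound; no Cramer's-rule estimate with quantum-integer denominators is needed or available.

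Second, the obstruction you identify for $\gcd(r,q)>1$ --- a vanishing quantum integer $[k]_r$ appearing in a Rosso--Jones-type denominator --- is not how non-invertibility arises. Since $C_{p,q}$ is a cobordism from $T^2$ to $T^2$, the matrix entries of $RT_r(C_{p,q})$ in the $e$-basis (and in $\tilde F_m$) are roots of unity, not ratios of quantum integers; the quantum-integer ratios of Rosso--Jones only appear after closing the cable in $S^3$. The actual obstruction is combinatorial: writing $d=\gcd(r,q)>1$, $q=dq'$, $r=dr'$, no index $ql\pm1$ can ever reduce to $nd$ in $\{1,\dots,m\}$, because $ql-kr\pm1=nd$ would force $d\mid1$. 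Hence every row $nd$ of the matrix is identically zero and the matrix is singular. Your proposed ``kernel element coming from a vanishing quantum integer'' does not exist; the kernel comes from missing rows. The verification that the change-of-basis matrix $R_m$ is invertible when $\gcd(r,q)=1$ (Proposition~\ref{2NBasisSpans}) is the real technical core and proceeds by a cofactor expansion exploiting that every row and column of $R_m$ has at most two nonzero entries, all roots of unity, with the sparsity pattern controlled by elementary arithmetic modulo $r$ (Lemmas~\ref{MatLem1} and~\ref{MatLem2}); none of this matches the block structure or quantum-integer bookkeeping you describe.
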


As we will show in Section \ref{Sec: BoundingCables}, the coprime condition between $r$ and $q$
leads to the discrepancy between recovering the limit superior in Corollary \ref{Weakequivlim}
versus the full limit in Corollary \ref{Cor:2n}.

The paper is organized as follows: We recall properties of the Reshetikhin--Turaev $SO_3$-TQFTs, the
$RT_r$ torus knot cabling formula, and relevant properties of the Turaev--Viro invariants in Section
\ref{Sec:Preliminaries}. In Section \ref{Sec: BoundingCables}, we prove Theorem \ref{mainthm}
assuming Theorem \ref{RTinvertible}. In Section \ref{Sec: TechLemmaPfs}, the construction of the
relevant basis for $RT_r(T^2)$ and the proof of Theorem \ref{RTinvertible} is given. Lastly, we
consider future directions in Section \ref{Sec: FD}. 

\begin{ack*}
The authors would like to thank their advisor Efstratia Kalfagianni for guidance and helpful
discussions. Additionally, we would like to thank Renaud Detcherry  for his comments and suggestions
on an early version of this paper. 
Finally, we thank the reviewer for their time and careful reading as well as their valuable suggestions for improving the overall clarity of the arguments in Section 4. 
Part of this research was supported in the form of graduate
Research Assistantships by NSF Grants DMS-1708249 and DMS-2004155. For the second author, this
research was partially supported by a Herbert T. Graham Scholarship through the Department of
Mathematics at Michigan State University. 
\end{ack*}

\section{Preliminaries}\label{Sec:Preliminaries}

\subsection{Reshetikhin--Turaev TQFT}
In this subsection, we outline relevant properties of the Reshetikhin--Turaev $SO_3$-TQFTs, which
were defined by Reshetikhin and Turaev in \cite{ReshetikhinTuraev1991}. Let $\mathfrak{Cob}$ be the
category of $(2+1)$-dimensional cobordisms and $Vect(\mathbb{C})$ be the category of
$\mathbb{C}$-vector spaces. For an odd integer $r\geq 3$ and primitive $2r$-th root of unity $A$,
one associates a $(2+1)$-dimensional TQFT $RT_r:\mathfrak{Cob} \rightarrow Vect(\mathbb{C})$.
Blanchet, Habegger, Masbaum, and Vogel \cite{BHMVKauffman} gave a skein-theoretic framework for this
$SO_3$-TQFT, and its main properties are the following: 

\begin{enumerate}[1)]
    \item For a closed oriented surface $\Sigma$, $RT_r(\Sigma)$ is a finite dimensional vector
    space over $\mathbb{C}$ with the natural Hermitian form. For a disjoint union $\Sigma \sqcup
    \Sigma'$, we have $RT_r(\Sigma \sqcup \Sigma') = RT_r(\Sigma) \otimes RT_r(\Sigma')$.
    \item For an oriented closed 3-manifold $M$, $RT_r(M) \in \mathbb{C}$ is a topological
    invariant. 
    \item For an oriented compact 3-manifold $M$ with boundary $\partial M$, $RT_r(M)\in
    RT_r(\partial M)$ is a vector.
    \item For a cobordism $(M,\Sigma_1, \Sigma_2)$, $RT_r(M): RT_r(\Sigma_1) \rightarrow
    RT_r(\Sigma_2)$ is a linear map.
\end{enumerate}

In \cite{BHMVKauffman}, the authors also give explicit bases for any surface. However, we will focus on
$RT_r(T^2)$, which can be considered as a quotient of the Kauffman skein module of the genus $1$
handlebody $D^2 \times S^1$. 

We begin by coloring the core $\{0\}\times S^1$ by the $(i-1)$-th Jones-Wenzl idempotent. This gives
a family of elements $e_i$ of the Kauffman skein module of the solid torus. However, there are only
finitely many Jones-Wenzl idempotents for a given odd $r = 2m+1$ and $2r$-th root of unity $A$,
namely $e_1,\dots,e_{2m-1}$ \cite{BHMVKauffman}. We can consider these $e_i$'s as elements of the
quotient $RT_r(T^2)$, giving us a basis.

\begin{thm}[\cite{BHMVKauffman}, Theorem 4.10]
Let $r=2m+1 \geq 3$. Then the family $e_1,\dots,e_m$ is an orthonormal basis for $RT_r(T^2)$.
Moreover, the relation $e_{m-i} = e_{m+1+i}$ holds for $0\leq i \leq m-1$.
\end{thm}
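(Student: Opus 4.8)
The plan is to realize $RT_r(T^2)$ explicitly from the Kauffman bracket skein module of the solid torus and then to compute the relevant Hermitian Gram matrix by a surgery argument. Recall that the skein module of $S^1\times D^2$ is the polynomial algebra $\mathbb{C}[z]$ with $z$ the core, that $e_i=S_{i-1}(z)$ for the Chebyshev polynomials $S_n$, and that under the universal construction $RT_r(T^2)$ is the quotient of $\mathbb{C}[z]$ by the left radical of the sesquilinear pairing $\langle x,y\rangle=RT_r(\,S^1\times S^2;\ x\cup\bar y\,)$, where $S^1\times S^2$ is the double of $S^1\times D^2$ carrying the colored core $x$ in one copy and the conjugate of $y$ in the other. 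First I would show that the family $\{e_1,e_3,\dots,e_{2m-1}\}$, i.e.\ the cores colored by the even colors $0,2,\dots,2m-2$ which are exactly the simple objects of the $SO_3$ skein category at an odd $r$, is orthonormal. The two cores appearing in $\langle e_a,e_b\rangle$ are parallel, framing-zero copies of $\{\mathrm{pt}\}\times S^1\subset S^1\times S^2$; cutting along a non-separating $S^2$, which each core meets once, identifies $\langle e_a,e_b\rangle$, up to the normalizing factor $RT_r(S^1\times S^2)=1$, with $\dim\mathrm{Hom}(\mathbf{1},\,f_{a-1}\otimes f_{b-1})=\delta_{ab}$. Hence these $m$ vectors are orthonormal, in particular linearly independent in $RT_r(T^2)$.

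Next I would establish the reflection relation $e_{m-i}=e_{m+1+i}$ for $0\le i\le m-1$, equivalently $e_j=e_{2m+1-j}$ for $1\le j\le 2m$. This is precisely the feature separating the $SO_3$ theory at odd $r$ from the $SU(2)$ one: the top-color idempotent $f_{r-2}$ is invertible and acts as the identity on $RT_r(T^2)$, so fusing a core colored $f_c$ with $f_{r-2}$, which by the truncated fusion rules produces the core colored $f_{r-2}\otimes f_c=f_{r-2-c}$, does not change the class of the skein; translating colors into indices gives $e_{c+1}=e_{r-1-c}$, which is the asserted relation. Concretely this is checked from the skein-theoretic description of the $SO_3$ Kirby color, which involves only the odd-indexed $e_i$, so that encircling a core by $f_{r-2}$ has no effect.

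Finally I would assemble the two pieces. The $e_i$ with $i\ge 1$ span $\mathbb{C}[z]$, hence span $RT_r(T^2)$; the Chebyshev recursion together with the root-of-unity vanishing $e_r=S_{r-1}(z)=0$ (since $f_{r-1}$ has vanishing quantum dimension at our root of unity and is therefore negligible), the resulting periodicity $e_{r+n}=-e_{r-n}$, and the reflection relation $e_j=e_{2m+1-j}$ from the second step reduce every $e_i$ to a scalar multiple of some $e_j$ with $1\le j\le m$, so $\{e_1,\dots,e_m\}$ spans. Moreover, since the folding $j\mapsto\min(j,2m+1-j)$ sends the odd indices $\{1,3,\dots,2m-1\}$ bijectively onto $\{1,\dots,m\}$, the reflection relation identifies the spanning set $\{e_1,\dots,e_m\}$ with the orthonormal set $\{e_1,e_3,\dots,e_{2m-1}\}$ of the first step; hence $\{e_1,\dots,e_m\}$ is an orthonormal basis and $\dim RT_r(T^2)=m$. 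The step I expect to be the real obstacle is the second one: an honest verification that $f_{r-2}$ is transparent and acts as the identity, with the correct sign, on $RT_r(T^2)$, together with pinning down the normalization constants in the surgery computation of the first step; the remainder is routine Chebyshev bookkeeping.
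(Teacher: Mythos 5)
This statement is quoted in the paper from \cite{BHMVKauffman} (their Theorem 4.10) without a proof, so there is no in-paper argument to compare against; I will assess your outline on its own terms. It is essentially correct and follows the standard skein-theoretic route for the $SO_3$ theory at odd $r$: realize $RT_r(T^2)$ by the universal construction on the solid-torus skein module, compute the pairing by doubling to $S^1\times S^2$ and cutting along a non-separating sphere to get orthonormality of the even-color vectors $e_1,e_3,\dots,e_{2m-1}$, derive the reflection relation $e_j=e_{2m+1-j}$ from the invertible top color $f_{r-2}$, and observe that the folding $j\mapsto\min(j,2m+1-j)$ identifies $\{1,3,\dots,2m-1\}$ with $\{1,\dots,m\}$. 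Note that your step-1 identification of $\langle e_a,e_b\rangle$ with a $\mathrm{Hom}$-dimension is only immediate when $a-1,b-1$ are both even, i.e.\ simple objects of the $SO_3$ category; you do restrict to the odd-indexed $e_i$, but this restriction is load-bearing and worth flagging.

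The step to tighten is exactly the one you call the obstacle. "Encircling a core by $f_{r-2}$ has no effect" needs to be unpacked into two facts working together: the double braiding of $f_{r-2}$ with $f_c$ has eigenvalue $(-1)^c$, hence is trivial precisely for the even colors appearing in the $SO_3$ Kirby element $\omega$; and $\Delta_{r-2}=1$. These let you slide a parallel $f_{r-2}$-colored core off the $\omega$-colored surgery circle in $S^1\times S^2$ and then evaluate it to its quantum dimension $1$, giving $\langle e_{r-1}\cdot x - x,\,y\rangle=0$ for all $x,y$. That is the precise sense in which $e_{r-1}$ acts as the identity on $RT_r(T^2)$: the conclusion has to pass through the universal-construction radical, not a bare categorical assertion of transparency. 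A cleaner single-shot alternative is to compute the Gram matrix $\langle e_a,e_b\rangle=\delta_{ab}+\delta_{a,\,r-b}$ on $e_1,\dots,e_{r-1}$ directly from the modified killing property of $\omega$ (it projects onto the two colors $0$ and $r-2$, both of quantum dimension $1$); both the reflection relation and the orthonormality of $e_1,\dots,e_m$ then drop out of that one computation.
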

The second part of the theorem implies that $\{e_1,e_3,\dots ,e_{2m-1}\}$ is just a reordering of
the basis $\{e_1,\dots, e_m\}$.

\subsection{The Cabling Formula}
Here, we will give an explicit description for the Reshetikhin--Turaev invariants of the torus knot
cable spaces.

Let $p$ and $q$ be coprime integers where $q>0$, and let $C_{p,q}$ be the $(p,q)$-cable space $C_{p,q}$.
These spaces are Seifert-fibered and therefore have simplicial volume zero. For $r=2m+1 \geq 3$, we
extend the vectors $e_i \in RT_r(T^2)$ to all $i \in \mathbb{Z}$ in the following way. Let $e_{-i} =
-e_i$ for any $i \geq 0$, and let $e_{i+kr} = (-1)^ke_i$ for any $k \in \mathbb{Z}$. Note this means
that $e_r = e_0 = 0$.

Regarding the cable space $C_{p,q}$ as a cobordism between tori, the Reshetikhin--Turaev $SO_3$-TQFT
gives a linear map
\[
RT_r(C_{p,q}): RT_r(T^2) \rightarrow RT_r(T^2).
\]
The map $RT_r(C_{p,q})$ sends the element $e_i$ to the element of $RT_r(T^2)$ corresponding to a
$(p,q)$-torus knot embedded in the solid torus and colored by the $(i-1)$-th Jones-Wenzl idempotent.
Morton \cite{MortonCabling} gives the following formula for the image of the basis elements under
$RT_r(C_{p,q})$.

\begin{thm}[\cite{MortonCabling}, Section 3, Cabling Formula] \label{CablingFormula}
\[
RT_r(C_{p,q})(e_i) = A^{pq(i^2-1)/2} \sum_{k \in S_i} A^{-2pk(qk +1)}e_{2qk+1},
\]
where $S_i$ is the set
\[
S_i = \{-\frac{i-1}{2}, -\frac{i-3}{2}, ..., \frac{i-3}{2}, \frac{i-1}{2}\}.
\]
\end{thm}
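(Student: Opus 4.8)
The statement is the Rosso--Jones torus-knot formula, transcribed into the Kauffman-bracket skein language of \cite{BHMVKauffman}; since it is attributed to Morton \cite{MortonCabling}, the task is really to reconstruct a proof. The plan is to establish it first in the \emph{generic} setting --- over $\mathbb{Z}[A^{\pm1}]$, in the skein module $\mathcal{C}$ of the annulus with its basis $\{e_i\}_{i\ge1}$ of Jones--Wenzl idempotents $f_{i-1}$ on the core --- and then to specialize $A$ to a primitive $2r$-th root of unity and pass to the quotient $RT_r(T^2)$ using the relations $e_{-i}=-e_i$, $e_{i+kr}=(-1)^ke_i$, $e_r=e_0=0$. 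With the $e_i$ extended to all $i\in\mathbb{Z}$, the identity in the statement is exactly the image of the generic formula under this specialization, so it suffices to treat the generic case.

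The geometric heart is a description of the decorated $T_{p,q}$ inside the solid torus $V$ via a cyclic covering. Let $\pi\colon\widetilde V\to V$ be the $q$-fold cyclic cover $\mathrm{id}_{D^2}\times(z\mapsto z^q)$, which unwraps the longitude, so that $\pi_*\widetilde\ell=q\ell$ and $\pi_*\widetilde m=m$ on first homology (with $\ell,m$ and $\widetilde\ell,\widetilde m$ the longitudes and meridians). The curve $\widetilde\ell+p\,\widetilde m\subset\widetilde V$ is isotopic in $\widetilde V$ to the core, but its image in $V$ has homology class $q\ell+pm$ and, since $\gcd(p,q)=1$, is embedded and isotopic to $T_{p,q}$. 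So I would realize the decorated $T_{p,q}$ concretely through this picture and compute its class in $\mathcal{C}$ by recording two elementary operations: (i) adding $p$ meridional twists to a decoration of the core is the $p$-th power of the full-twist map $t$, which is diagonal, $t(e_i)=\mu_i e_i$ with $\mu_i=(-1)^{i-1}A^{i^2-1}$ the ribbon coefficient of $f_{i-1}$; and (ii) pushing a decoration of the core ``$q$ times around'' is the $q$-th power (Adams) operation on $\mathcal{C}$, which sends the character $\sum_{k\in S_i}x^{2k}$ of the $i$-dimensional $\mathfrak{sl}_2$-module to $\sum_{k\in S_i}x^{2qk}$ and, re-expanded in $\{e_j\}$ via the standard Chebyshev/telescoping identities, produces precisely the indices $2qk+1$, $k\in S_i$, appearing in the statement.

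Assembling these, the coefficient of $e_{2qk+1}$ should pick up the quadratic-in-$k$ factor $A^{-2pk(qk+1)}$ from the $p$-fold twist acting on the weight-$2qk$ part (heuristically a ``fractional twist'' $\mu^{p/q}$, made honest by a framing computation in the cover), while the ribbon/framing structure of $f_{i-1}$ under the wrapping and twisting contributes the overall factor $A^{pq(i^2-1)/2}$; matching all $A$-powers and signs, using $t(e_i)=\mu_ie_i$, the Chebyshev identities, and $e_{-j}=-e_j$, should then give the claimed expression. Two consistency checks would guide this: for $i=1$ we have $S_1=\{0\}$ and the formula must reduce to $RT_r(C_{p,q})(e_1)=e_1$, and for $q=1$ the prefactor and the telescoping sum must cancel so that $RT_r(C_{p,1})=\mathrm{Id}$, reflecting that a $(p,1)$-cable is trivial for the framing on $T_{p,q}$ built into the definition of $C_{p,q}$.

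I expect the genuine obstacle to be exactly this bookkeeping: turning the two clean conceptual inputs (``twisting $=t^p$'', ``wrapping $=$ Adams operation'') into the precise constants $A^{pq(i^2-1)/2}$ and $A^{-2pk(qk+1)}$ with the correct signs and no global-unit or reindexing error, since the framing anomaly produced by wrapping the $i-1$ strands of $f_{i-1}$ around $q$ times while also twisting them $p$ times --- together with the normalization of the framing on $T_{p,q}$ implicit in the definition of $C_{p,q}$ --- entangles the prefactor with the sum and must be tracked carefully. A more representation-theoretic route that handles the framing more transparently is to present $T_{p,q}$ coloured by $f_{i-1}$ as the ``annular closure'' of a $q$-strand tangle whose monodromy is a prescribed $q$-th root of the full twist on $f_{i-1}^{\otimes q}$, decompose into weight spaces on which this monodromy acts by the corresponding scalar, and take the quantum trace; this is the same computation in the $U_q(\mathfrak{sl}_2)$-TQFT and yields the same quadratic exponents.
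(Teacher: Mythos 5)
This theorem is cited in the paper --- attributed to Morton \cite{MortonCabling}, Section 3 --- and no proof of it is given there, so there is no in-paper argument to compare your sketch against. Your outline is the standard skein-theoretic route to the Morton/Rosso--Jones formula, and its two conceptual pillars are both correct: the $q$-fold wrap of the annulus acts on the Jones--Wenzl basis as the Adams operation $\psi_q$, giving $\psi_q(e_i)=\sum_{k\in S_i}e_{2qk+1}$ after expanding $\chi_i(x^q)=\sum_{k\in S_i}x^{2qk}$ and $x^{2m}+x^{-2m}=\chi_{2m+1}-\chi_{2m-1}$ and using $e_{-j}=-e_j$; and the full-twist map is diagonal with ribbon eigenvalues $\mu_i=(-1)^{i-1}A^{i^2-1}$. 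Your $q=1$ telescoping check and $i=1$ check are also correct.

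The gap is that you never actually compute the skein class of $T_{p,q}$ decorated by $e_i$; the two heuristics above are asserted to combine into the formula but the combination is where the content lives. The cyclic cover $\pi\colon\widetilde V\to V$ is a purely topological device: it is a local homeomorphism but does not induce a map on skein modules, so identifying the image of the twisted core with $T_{p,q}$ gives you the knot, not a skein computation, and the ``fractional twist'' $\mu^{p/q}$ is not (yet) a well-defined operation on $\mathcal{C}$. A quick check shows the heuristic does not even reproduce the stated coefficients directly: $\mu_{2qk+1}=A^{4q^2k^2+4qk}$ raised to the $p/q$-th power is $A^{4pqk^2+4pk}$, which does not match $A^{-2pk(qk+1)}$ even after the usual $A\leftrightarrow A^{-1}$ convention change, and the missing correction is exactly the framing bookkeeping you flag but do not carry out; it entangles with the prefactor $A^{pq(i^2-1)/2}$ in a way the sketch leaves open. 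To close the argument one must replace the cover/fractional-twist picture with a concrete linear-algebraic identity, for instance Morton's own route: present the decorated $T_{p,q}$ as the annular closure of the $q$-braid $(\sigma_1\cdots\sigma_{q-1})^p$ threaded through $f_{i-1}$, change to the power-sum basis $\{P_m\}$ of $\mathcal{C}$ on which that braid acts by genuine scalars, and read off the coefficients. The $U_q(\mathfrak{sl}_2)$ weight-trace argument you mention at the end is a valid alternative and, as you say, handles the framing more transparently, but it is a substantively different computation from the skein one you began; as written, the proposal identifies the right ingredients but asserts the final identity rather than deriving it.
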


As we will see in Subsection \ref{Subsec: TVProp}, the Reshetikhin--Turaev TQFT is closely related
to the Turaev--Viro invariants for $3$-manifolds. By using their relationship, the explicit formula
given in Theorem \ref{CablingFormula} will allow us to obtain a lower bound on the Turaev--Viro
invariants under the cabling operation. 

\subsection{Properties of the Turaev--Viro invariants}\label{Subsec: TVProp} In this subsection, we
discuss properties of the Turaev--Viro invariants \cite{TuraevViro} as well as an important
characterization in terms of the Reshetikhin--Turaev invariants. 

The Turaev--Viro invariants were defined by Turaev and Viro \cite{TuraevViro} in terms of state sums
over triangulations of a 3-manifold $M$, but they are also closely related to the
Reshetikhin--Turaev invariants. The following identity was originally proven for closed 3-manifolds
by Roberts \cite{RobertsSkein} and then extended to compact manifolds with boundary by Benedetti and
Petronio \cite{TVCompact}.

\begin{thm}[\cite{TVCompact, RobertsSkein}]\label{TVandRT} Let $r\geq3$ be an odd integer, and let
$q$ be a primitive $2r$-th root of unity. Then for a compact oriented manifold $M$ with toroidal
boundary, 
$$TV_r\left(M;q\right) =\left \Vert RT_r\left(M;q^\frac{1}{2}\right) \right \Vert^2$$ where $\Vert
\cdot \Vert$ is the natural Hermitian norm on $RT_r\left(\partial M\right).$
\end{thm}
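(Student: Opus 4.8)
The plan is to deduce this from a purely $3$-dimensional identity together with the gluing axiom of $RT_r$, following Roberts \cite{RobertsSkein} for the closed case and Benedetti--Petronio \cite{TVCompact} in general.

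\textbf{Reduction to a doubling identity.} Let $D(M) = M \cup_{\mathrm{id}} \overline{M}$ be the closed oriented $3$-manifold obtained by gluing $M$ to its orientation reversal along the identity map of $\partial M$. The geometric content of the statement is the identity
\[
TV_r(M, q) = RT_r\!\left(D(M), q^{1/2}\right).
\]
Granting this, the theorem is a formal consequence of the TQFT axioms: viewing $M$ as a cobordism $\emptyset \to \partial M$, the gluing axiom gives $RT_r(D(M)) = \big\langle RT_r(M),\, RT_r(\overline M)\big\rangle$, the pairing being the evaluation $RT_r(\partial M)\otimes RT_r(\overline{\partial M})\to\mathbb{C}$. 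Since $RT_r(\overline M)$ is the image of $RT_r(M)$ under the canonical conjugate-linear isomorphism $RT_r(\partial M)\xrightarrow{\sim} RT_r(\overline{\partial M})$, this pairing is exactly the Hermitian square $\Vert RT_r(M)\Vert^2$. Specializing to $\partial M=\emptyset$ recovers the classical identity $TV_r(M)=|RT_r(M)|^2$.

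\textbf{Proving the doubling identity.} For this I would use Roberts' chain-mail technique. Choose a handle decomposition of $M$ (relative to a collar $\partial M\times I$ when $\partial M\neq\emptyset$), or equivalently a triangulation together with its dual spine. From the belt spheres of the $1$- and $2$-handles one builds a framed \emph{chain-mail link} $\mathrm{CM}(M)$ inside a connected sum of copies of $S^1\times S^2$ (with boundary $\partial M\sqcup\overline{\partial M}$ in the relative case). Two computations must then be carried out: first, evaluating $\mathrm{CM}(M)$ in the Kauffman skein module with every component colored by the Kirby color $\omega$, after the standard global normalization, reproduces termwise the Turaev--Viro state sum associated to the triangulation; second, that same colored skein evaluation, via Kirby calculus (handle slides and blow-downs), computes $RT_r$ of the result of the corresponding surgery, which one identifies with the double $D(M)$ (respectively, with the vector $RT_r(M)\in RT_r(\partial M)$ once the boundary colors are recorded). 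Matching the two yields the identity. The hypotheses that $r$ is odd and that $q$ is a primitive $2r$-th root of unity enter here precisely to ensure that $\omega$ is a legitimate Kirby color, i.e. that the relevant Gauss sums are nonzero and the handle-slide relation holds.

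\textbf{Main obstacle.} The substantive difficulty is the boundary bookkeeping. One must verify that the relative chain-mail link really produces $D(M)$ and not $D(M)$ connect-summed with spurious $S^1\times S^2$ factors, since such factors would rescale both sides by powers of the global normalization constant and destroy the equality; and one must check that the boundary strata of the state sum assemble into the correct vector of $RT_r(\partial M)$ expressed in the orthonormal basis $e_1,\dots,e_m$ coming from the Jones--Wenzl idempotents, rather than in some rescaling of that basis --- this is what pins down the exact factor $\Vert\cdot\Vert^2$ with no extra constant. A secondary, routine but necessary, point is triangulation-independence of the construction, which follows from Pachner moves together with the defining relations of the Kauffman skein category.
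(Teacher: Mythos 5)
The paper does not prove this statement; it is quoted verbatim from Roberts \cite{RobertsSkein} (closed case) and Benedetti--Petronio \cite{TVCompact} (manifolds with boundary), with no argument supplied. Your outline -- reducing to the doubling identity $TV_r(M)=RT_r(D(M))$, deducing $\Vert RT_r(M)\Vert^2$ from the gluing axiom and the Hermitian structure, and establishing the doubling identity via Roberts' chain-mail construction with the relative boundary bookkeeping supplied by Benedetti--Petronio -- is precisely the route those references take, and you have correctly isolated the genuine technical points (normalization against spurious $S^1\times S^2$ summands, identification of the boundary state vector in the Jones--Wenzl basis, and the role of the odd-$r$, primitive-$2r$-th-root hypothesis in making $\omega$ a Kirby color).
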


We note that this identity holds more generally, but we have restricted to manifolds with toroidal
boundary for simplicity.

In \cite{detcherry2019gromov}, Detcherry and Kalfagianni proved that the growth rate of the
Turaev--Viro invariants has properties reminiscent of simplicial volume. We summarize their results
in the following theorem.

\begin{thm}[\cite{detcherry2019gromov}]\label{TVProperties} Let $M$ be a compact oriented
$3$-manifold, with empty or toroidal boundary.
\begin{enumerate}[1)]
    \item If $M$ is a Seifert manifold, then there exist constants $B>0$ and $N$ such that for any
    odd $r\geq 3$, we have $TV_r(M) \leq Br^N$ and $LTV(M)\leq 0$.
    \item If $M$ is a Dehn-filling of $M'$, then $TV_r(M) \leq TV_r(M')$ and $LTV(M) \leq LTV(M')$.
    \item If $M=M_1 \underset{T}{\bigcup} M_2$ is obtained by gluing two $3$-manifolds along a torus
    boundary component, then $TV_r(M)\leq TV_r(M_1)TV_r(M_2)$ and $LTV(M) \leq LTV(M_1)+LTV(M_2)$.
    \end{enumerate}
\end{thm}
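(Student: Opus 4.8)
The plan is to pass everything through the Reshetikhin--Turaev picture via Theorem~\ref{TVandRT}: since $TV_r(M)=\|RT_r(M)\|^2$ (with $RT_r$ evaluated at $q^{1/2}$) for the Hermitian norm on the finite-dimensional space $RT_r(\partial M)$, and since the $\{e_i\}$ of \cite{BHMVKauffman} form an \emph{orthonormal} basis of $RT_r(T^2)$ with $\dim RT_r(T^2)=(r-1)/2$, all three statements become estimates on RT vectors. The two structural inputs used throughout are the gluing axiom of the TQFT (gluing $3$-manifolds along a boundary component contracts the RT vectors over that surface factor) and the unitarity of the $\mathrm{MCG}(T^2)=SL_2(\mathbb{Z})$-action on $RT_r(T^2)$. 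With these, the monotonicity statements (Parts 2 and 3) are essentially formal, and the only serious point is the polynomial bound in Part 1, which I expect to be the main obstacle.

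\textbf{Part 1 (Seifert manifolds).} A Seifert fibered $M$ with empty or toroidal boundary can be presented as a Dehn filling of $\Sigma_{g,n}\times S^1$ for a compact surface $\Sigma_{g,n}$ with $n\geq 1$ boundary components, the filling slopes encoding the exceptional fibers and the Euler number. Cutting $\Sigma_{g,n}\times S^1$ along vertical tori into copies of $(\text{pair of pants})\times S^1$ and $T^2\times I$, the gluing axiom writes $RT_r(\Sigma_{g,n}\times S^1)$ as an iterated contraction of finitely many elementary tensors, the number depending only on $g,n$; in the $e_i$-basis the entries of these tensors are, up to roots of unity, products of $S$-matrix entries (of size $O(r^{-1/2})$), $T$-matrix entries (unit modulus), and fusion coefficients (values in $\{0,1\}$). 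Since the contraction indices range over a set of size $(r-1)/2$ and the number of factors is bounded, every coordinate of $RT_r(\Sigma_{g,n}\times S^1)$, hence its norm, is bounded by a polynomial in $r$; by Part 2 below, passing to the Dehn filling $M$ only decreases the norm. Thus $TV_r(M)\leq Br^N$ for suitable $B,N$, and then
\[
LTV(M)=\limsup_{r\to\infty,\ r\text{ odd}}\tfrac{2\pi}{r}\log\bigl|TV_r(M)\bigr|\leq \limsup_{r\to\infty}\tfrac{2\pi}{r}\bigl(\log B+N\log r\bigr)=0.
\]

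\textbf{Part 2 (Dehn filling).} Write $M=M'\cup_T (D^2\times S^1)$, filling one boundary torus at a time. With $\partial M'=T\sqcup\Sigma$, the gluing axiom gives $RT_r(M)=\langle RT_r(M'),u\rangle$, the pairing taken over the $RT_r(T)$-factor against the vector $u\in RT_r(T)$ of the filling solid torus. The solid torus with empty core is $e_1$, and any other filling slope is obtained from it by a mapping class of $T$ acting unitarily, so $u$ is a unit vector. Writing $RT_r(M')=\sum_i e_i\otimes v_i$ and $u=\sum_i u_i e_i$, one gets $RT_r(M)=\sum_i \overline{u_i}\,v_i$, whence by the triangle inequality, Cauchy--Schwarz, and orthonormality of $\{e_i\}$,
\[
\|RT_r(M)\|^2\leq\Bigl(\sum_i|u_i|\,\|v_i\|\Bigr)^2\leq\Bigl(\sum_i|u_i|^2\Bigr)\Bigl(\sum_i\|v_i\|^2\Bigr)=\|RT_r(M')\|^2 .
\]
Hence $TV_r(M)\leq TV_r(M')$, and applying $\tfrac{2\pi}{r}\log(\cdot)$ and $\limsup$ gives $LTV(M)\leq LTV(M')$.

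\textbf{Part 3 (gluing along a torus).} Write $\partial M_1=T\sqcup\Sigma_1$ and $\partial M_2=T\sqcup\Sigma_2$ with $T$ identified by an orientation-reversing homeomorphism, so $\partial M=\Sigma_1\sqcup\Sigma_2$. The gluing axiom contracts over the common $RT_r(T)$-factor: writing $RT_r(M_1)=\sum_i e_i\otimes v_i$ and $RT_r(M_2)=\sum_j e_j^{*}\otimes w_j$ in the orthonormal basis, one obtains $RT_r(M)=\sum_i v_i\otimes w_i$ up to an overall phase, so
\[
\|RT_r(M)\|^2=\sum_{i,j}\langle v_i,v_j\rangle\langle w_i,w_j\rangle\leq\Bigl(\sum_i\|v_i\|\,\|w_i\|\Bigr)^2\leq\Bigl(\sum_i\|v_i\|^2\Bigr)\Bigl(\sum_i\|w_i\|^2\Bigr)=\|RT_r(M_1)\|^2\,\|RT_r(M_2)\|^2,
\]
again by the triangle inequality, Cauchy--Schwarz, and orthonormality. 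Thus $TV_r(M)\leq TV_r(M_1)TV_r(M_2)$, and since $\limsup(a_r+b_r)\leq\limsup a_r+\limsup b_r$, we get $LTV(M)\leq LTV(M_1)+LTV(M_2)$. The genuinely nontrivial step is the uniform polynomial exponent $N$ in Part 1: it rests on the explicit description of $RT_r$ of circle bundles over surfaces in the $\{e_i\}$-basis, together with careful bookkeeping of the sizes of $S$- and $T$-matrix entries through the pair-of-pants decomposition and the exceptional-fiber fillings.
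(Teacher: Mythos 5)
This theorem is quoted from Detcherry--Kalfagianni \cite{detcherry2019gromov} and is not reproved in the present paper, so there is no ``paper's own proof'' to compare against here; I will therefore assess your reconstruction against the source. Your strategy --- translate $TV_r = \|RT_r\|^2$ via Theorem \ref{TVandRT} and then run Cauchy--Schwarz on the RT vectors under the gluing axiom --- is exactly the Detcherry--Kalfagianni mechanism, and your arguments for Parts 2) and 3) are correct as written. One economy worth noting: Part 2) is a formal consequence of Part 3), since $TV_r(D^2\times S^1)=\|e_1\|^2=1$ and the $LTV$ estimate follows by $\tfrac{2\pi}{r}\log(\cdot)$; this avoids any appeal to unitarity of the $SL_2(\mathbb{Z})$-action, which, while true on $RT_r(T^2)$ because the $\{e_i\}$ give an orthonormal basis and the form is MCG-invariant, is a slightly delicate point in the Hermitian (not a priori unitary) BHMV theory, and you should not take it for granted at higher genus.

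Part 1) is where your sketch is thinnest. First, the reduction ``$M$ Seifert with nonempty toroidal boundary is a Dehn filling of $\Sigma_{g,n}\times S^1$'' is only immediate when the base orbifold has orientable underlying surface; over a non-orientable base the orientable circle bundle is not a product. This is repairable --- cut out vertical tori over curves separating Möbius bands, observe that the twisted $S^1$-bundle over the Möbius band is Seifert fibered over $D^2(2,2)$ and hence a double Dehn filling of $\Sigma_{0,3}\times S^1$, and reassemble --- but you should say so, since the theorem is stated for all compact oriented Seifert manifolds. Second, once Parts 2) and 3) are in hand, the ``careful bookkeeping of $S$- and $T$-matrix entries'' you invoke is unnecessary: by Part 3) iterated over a pair-of-pants decomposition and Part 2) for the fillings, it suffices to show $TV_r(\Sigma_{0,3}\times S^1)$ is polynomially bounded, and $TV_r(\Sigma_{0,3}\times S^1)=\sum_{i,j,k}N_{ijk}^2=O(r^3)$ does it. Your more hands-on estimate is not wrong, but it is doing work that the first two parts of the theorem already do for you, and as currently phrased it leaves the non-orientable-base case as a genuine gap.
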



\section{Bounding the Invariant Under Cabling}\label{Sec: BoundingCables}

In this section, we will prove Theorem \ref{mainthm} with the assumption of a key theorem, and we
reserve the technical details for Section \ref{Sec: TechLemmaPfs}.  We remark that the major
components of our argument follow from the work of Detcherry \cite{DetcherryCabling} where the case
when $p$ is odd and $q=2$ was proven. For convenience, we  will restate the main theorem.
\begin{customthm}{\ref{mainthm}} Let $M$ be a manifold with toroidal boundary, let $p,q$ be coprime
integers with $q>0$, and let $r \geq 3$ be an odd integer coprime to $q$. Suppose $M'$ is a
$(p,q)$-cable of $M$. Then there exists a constant $C>0$ and natural number $N$ such that
\[
\frac{1}{Cr^N}TV_r(M) \leq TV_r(M') \leq Cr^N TV_r(M).
\]
\end{customthm}
We will now assume Theorem \ref{RTinvertible}, which we also restate for convenience. 

\begin{customthm}{\ref{RTinvertible}} Let $p$ be coprime to some positive integer $q$. Then
    $RT_r(C_{p,q})$ is invertible if and only if $r$ and $q$ are coprime. Moreover, the operator norm $|||RT_r(C_{p,q})^{-1}|||$ grows at most polynomially.
\end{customthm}

\begin{proof}[Proof of Theorem \ref{mainthm}]
As mentioned previously, the case when $p$ is odd and $q=2$ was shown by Detcherry
\cite{DetcherryCabling}, and our approach follows closely in structure. We let $M$ be a manifold
with toroidal boundary, $p$ an integer, $q>0$ an integer coprime to $p$, $r \geq 3$ odd and coprime
to $q$, and $M'$ a $(p,q)$-cable of $M$.  We will proceed to prove Theorem \ref{mainthm} by showing
the upper  inequality of
\[
\frac{1}{Cr^N}TV_r(M) \leq TV_r(M') \leq Cr^N TV_r(M)
\]
followed by the lower inequality, where  $C>0$ and $N \in \mathbb{N}$. To obtain the upper
inequality, we first remark that $M'=C_{p,q}\underset{T}{\bigcup} M$. By Theorem \ref{TVProperties},
this implies that 
$$TV_r(M') \leq TV_r(C_{p,q}) TV_r(M).$$ Since $C_{p,q}$ is a Seifert manifold, we have that 
$$TV_r(C_{p,q}) \leq C_1r^{N_1}$$ for some $C_1>0$ and $N_1\in\mathbb{N}$ also by Theorem
\ref{TVProperties}. This leads to the upper inequality 
$$TV_r(M') \leq C_1r^{N_1} TV_r(M).$$ For the lower inequality, we will use Theorem
\ref{RTinvertible}. From the properties of the Reshetikhin--Turaev $SO_3$-TQFT, we consider the
linear map
\[
RT_r(C_{p,q}): RT_r(T^2) \rightarrow RT_r(T^2).
\]
If $M$ only has one boundary component, then  
$$RT_r(M')=RT_r(C_{p,q})RT_r(M)$$ by the properties of a TQFT. If $M$ has other boundary components,
then the invariant associated to any coloring $i$ of the other boundary components may be computed
as 
$$RT_r(M',i)=RT_r(C_{p,q})RT_r(M,i).$$ By the invertibility of $RT_r(C_{p,q})$ from Theorem
\ref{RTinvertible}, we have the inequality 
$$||RT_r(M)|| \leq |||{RT_r(C_{p,q})^{-1}}|||\cdot || RT_r(M')||$$ where  $||\cdot||$ is the norm
induced by the Hermitian form of the TQFT and $|||\cdot|||$ is the operator norm. Since the operator
norm grows at most polynomially by Theorem \ref{RTinvertible}, we obtain the inequality 
$$\frac{1}{C_2r^{N_2}}||RT_r(M)|| \leq ||RT_r(M')|| $$ for some $C_2 >0$ and $N_2\in \mathbb{N}$.
Lastly, by Theorem \ref{TVandRT}, the norm of the Reshetikhin--Turaev invariant is related to the
Turaev--Viro invariant such that we arrive to the desired inequality $$\frac{1}{C_3r^{N_3}}TV_r(M)
\leq TV_r(M') $$ for some $C_3 >0$ and $N_3\in \mathbb{N}$. This leads to 
\[
\frac{1}{Cr^N}TV_r(M) \leq TV_r(M') \leq Cr^N TV_r(M)
\]
 where  $C>0$ and $N \in \mathbb{N}$.
\end{proof}

As discussed in Section \ref{IntroSec}, the following corollaries follow from Theorem \ref{mainthm}.

\begin{customcor}{\ref{Weakequivlim}} Suppose $M$ satisfies Conjecture \ref{GenCYvol} and $lTV(M) =
v_3||M||$. Then for any $p$ and $q$ coprime, any $(p,q)$-cable $M'$ also satisfies Conjecture
\ref{GenCYvol}.
\end{customcor}

\begin{proof}
By Theorem \ref{TVProperties} Part $(1)$, $LTV(C_{p,q}) \leq 0$, and thus by Theorem
\ref{TVProperties} Part $(3)$, $LTV(M') \leq LTV(M)$. Since $lTV(M)=LTV(M)=v_3\|M\|$, the limit
exists, and any subsequence also converges to $v_3\|M\|$. By Theorem \ref{mainthm} along odd $r$,
\begin{align*}
    \limsup_{r\rightarrow \infty, \text{ } (r,q)=1} \frac{2\pi}{r} \log \left|\text{TV}_r \left(M'\right)\right| &=\limsup_{r\rightarrow \infty, \text{ } (r,q)=1} \frac{2\pi}{r} \log \left|\text{TV}_r \left(M\right)\right| = LTV(M) = v_3||M||,
\end{align*}
where $$\limsup_{r\rightarrow \infty, \text{ } (r,q)=1} \frac{2\pi}{r} \log \left|\text{TV}_r
\left(-\right)\right|$$ is the limit superior of the subsequence along which $r$ and $q$ are
coprime.

Since
\begin{align*}
         v_3||M||=\limsup_{r\rightarrow \infty, \text{ } (r,q)=1} \frac{2\pi}{r} \log \left|\text{TV}_r \left(M'\right)\right| & \leq LTV(M') \leq LTV(M)=v_3||M||,
\end{align*}
we have $$LTV(M') = v_3 ||M|| = v_3 ||M'||,$$ where the final equality follows from the fact that
the simplicial volume does not change under attaching a $(p,q)$-cable space.
\end{proof}

\begin{customcor}{\ref{Cor:2n}} Suppose $M$ satisfies Conjecture \ref{GenCYvol}. Then for any odd
$p$ and $n \in \mathbb{N}$, any $(p,2^n)$-cable $M'$ also satisfies Conjecture \ref{GenCYvol}.
Moreover, if $lTV(M) = v_3||M||$, then $lTV(M') =LTV(M') = v_3 ||M'||$.
\end{customcor}

\begin{proof}
Since $r$ is odd, $(r,2^n) = 1$ for any $n\geq 1$, which means Theorem \ref{mainthm} holds for any
$(p,2^n)$-cable of $M$ provided $p$ is odd. Since $||M||=||M'||$, this implies that
$$LTV(M') = LTV(M) = v_3||M||= v_3||M'||,$$ so $M'$ also satisfies Conjecture \ref{GenCYvol}.

Theorem \ref{mainthm} also implies that $lTV(M') = lTV(M)$. In the case where $lTV(M) = v_3 ||M||$,
we recover the full limit $$lTV(M')=lTV(M)= v_3||M| = v_3||M'|| = LTV(M').$$
\end{proof}


\section{Proof of Supporting Theorem}\label{Sec: TechLemmaPfs}

In this section, we will provide a proof of Theorem \ref{RTinvertible}, which we restate here for
convenience.

\begin{customthm}{\ref{RTinvertible}} Let $p$ be coprime to some positive integer $q$. Then
    $RT_r(C_{p,q})$ is invertible if and only if $r$ and $q$ are coprime. Moreover, the operator norm $|||RT_r(C_{p,q})^{-1}|||$ grows at most polynomially.
\end{customthm}

We will use the following supporting proposition for the proof of Theorem \ref{RTinvertible}, which
is given in Subsection \ref{Subsec: RTinvertible}. The proof of this proposition is given in
Subsection \ref{subsec: suppProp}. We also use a couple of technical lemmas which are subsequently
proven in Subsection \ref{subsec: TechLemmaPfs}. We begin by constructing a basis over which
$RT_r(C_{p,q})$ admits a simpler expression.

By the cabling formula given by Theorem \ref{CablingFormula}, 
\begin{align*}
    RT_r(C_{p,q})(e_i) \in Span\{e_1, e_{ql+1}, e_{ql-1}\}_{l=1}^{m-1}
\end{align*}
where $m=\frac{r-1}{2}$. Let $F_m := \{f_l\}_{l=0}^{m-1}$, where
\begin{align*}
    f_0 := & e_1  \\
    f_l := & e_{ql+1}-A^{2pl} e_{ql-1} \qquad l=1,\dots, m.
\end{align*}

Define $\Tilde{f}_l \in Span\{e_1, \dots, e_m\}$ to be the reduction of $f_l$ under the quotient
induced by the symmetries $e_{-i} = -e_i$ for any $i \geq 0$ and $e_{i+nr} = (-1)^ne_i$ for any $n
\in \mathbb{Z}$. Note that for each $l$, $ql\pm 1 = kr+j$ for some non-negative integers $k, j$
where $0 \leq j <r$. This means that up to sign, these symmetries imply 
\begin{align}
    e_{ql\pm 1} &= e_{ql-kr\pm 1} = e_j && \text{for  } 0 \leq j\leq m \label{eRed1}\\
    e_{ql\pm 1} &= e_{(k+1)r-ql\mp 1} = e_{r-j} && \text{for  } m+1 \leq j< r. \label{eRed2}
\end{align}

Finally, define $\Tilde{F}_m := \{\Tilde{f}_l\}_{l=0}^{m-1}$, and let $R_m$ be the $(m\times
m)$-matrix with columns corresponding to the reduced vectors $\Tilde{f}_l$, for $l=0,\dots, m-1$. In
particular, $\Tilde{f}_l$ corresponds to $col(l+1)$ of $R_m$, and the rows of $R_m$ correspond to
the original orthonormal basis $\{e_1, \dots, e_m\}$ spanning $RT_r(T^2)$.

\begin{remark}
We note that $F_m$, $\Tilde{F}_m$, and $R_m$ are also dependent on $p$ and $q$, but these
dependencies are suppressed to avoid unwieldy notation. 
\end{remark}

The following proposition will be used to prove Theorem \ref{RTinvertible}.

\begin{prop}\label{2NBasisSpans} Let $r = 2m+1 \geq3$ be coprime to $q$. Then $R_m$ is a change of basis
from $\Tilde{F}_m \rightarrow \{e_1, \dots, e_m\}$ and the operator norm $|||R_m^{-1}|||$ grows at
most polynomially in $m$. Moreover, for $i \in \{1,\dots, m\}$,

\begin{align}\label{Coeffofl}
RT_r(C_{p,q})(e_i) =& A^{\frac{qp}{2}(i^2-1)} \sum_{l \in T_i} A^{-p\left(\frac{q}{2}l^2 + l\right)} \Tilde{f}_l,
\end{align}
where $T_i = \{0,2,\dots, i-1\}$ for odd $i$ and $T_i = \{1,3,\dots, i-1\}$ for even $i$.
\end{prop}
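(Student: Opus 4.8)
The plan is to prove the two assertions of Proposition~\ref{2NBasisSpans} in the opposite order from how they are stated: first establish the expansion formula \eqref{Coeffofl}, and then use it to deduce that $\Tilde{F}_m$ is a basis.

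\textbf{Step 1: Rewriting the cabling formula in terms of the $f_l$.} Starting from Theorem~\ref{CablingFormula}, I would take $i \in \{1,\dots,m\}$ and look at
$RT_r(C_{p,q})(e_i) = A^{pq(i^2-1)/2} \sum_{k \in S_i} A^{-2pk(qk+1)} e_{2qk+1}$,
where $S_i = \{-\tfrac{i-1}{2}, \dots, \tfrac{i-1}{2}\}$. The first observation is that the index set $S_i$ is symmetric about $0$, so I would pair up the term for $k$ with the term for $-k$ (and handle $k=0$ separately when $i$ is odd). For $k>0$, the pair contributes $A^{-2pk(qk+1)} e_{2qk+1} + A^{-2pk(qk-1)} e_{-2qk+1} = A^{-2pk(qk+1)} e_{2qk+1} - A^{-2pk(qk-1)} e_{2qk-1}$ using $e_{-j} = -e_j$. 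Setting $l = 2k$ (so $l$ ranges over the nonzero elements of $T_i$, which are exactly the positive even integers up to $i-1$ when $i$ is odd, and the positive odd integers up to $i-1$ when $i$ is even), a short computation should show this pair equals $A^{-2pk(qk+1)}(e_{ql+1} - A^{2pl} e_{ql-1}) = A^{-p(\frac{q}{2}l^2 + l)} f_l$ after substituting $k = l/2$ and simplifying the exponent $-2pk(qk+1) = -2p\cdot\frac{l}{2}(q\frac{l}{2}+1) = -p(\frac{q}{2}l^2 + l)$. When $i$ is odd, the $k=0$ term is simply $A^0 e_1 = f_0$, which matches the $l=0$ term $A^{-p(0+0)}\Tilde f_0 = f_0$. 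This gives \eqref{Coeffofl} at the level of the $f_l$'s in the extended module; passing to the quotient $RT_r(T^2)$ via \eqref{eRed1}–\eqref{eRed2} replaces each $f_l$ by $\Tilde f_l$, yielding the stated identity. This step is essentially a bookkeeping computation and I do not expect it to be the main difficulty.

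\textbf{Step 2: $\Tilde F_m$ spans, hence is a basis.} Since $\dim RT_r(T^2) = m$ and $|\Tilde F_m| = m$, it suffices to prove either linear independence or spanning. I would argue spanning. From \eqref{Coeffofl}, the vector $RT_r(C_{p,q})(e_i)$ is (up to a nonzero scalar) equal to $\sum_{l \in T_i} (\text{nonzero scalar}) \Tilde f_l$, and $T_i$ ``increases'' with $i$ in the sense that $T_1 = \{0\} \subset T_3 = \{0,2\} \subset \dots$ and $T_2 = \{1\} \subset T_4 = \{1,3\} \subset \dots$. Thus, ordering the $e_i$ appropriately (odds first, then evens, each in increasing order), the matrix expressing $RT_r(C_{p,q})(e_i)$ in terms of the $\Tilde f_l$ is block lower-triangular with nonzero diagonal entries, so it is invertible. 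Hence $RT_r(C_{p,q})$ is an isomorphism onto $Span(\Tilde F_m)$, which forces $Span(\Tilde F_m) = RT_r(T^2)$ (as the image has dimension $m$), and $\Tilde F_m$ is a basis. Note that this argument also immediately shows $RT_r(C_{p,q})$ is invertible over the new basis, which is precisely the leverage needed for Theorem~\ref{RTinvertible}.

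\textbf{Anticipated main obstacle.} The genuinely delicate point is hidden in the passage from $f_l$ to $\Tilde f_l$ and in the claim that $\Tilde F_m$ is honestly a set of $m$ reduced vectors whose transition behavior is controlled. The reductions \eqref{eRed1}–\eqref{eRed2} can cause two different $f_l$'s to have overlapping support among the $e_1,\dots,e_m$, and one must rule out degenerate coincidences — e.g.\ that some $\Tilde f_l$ collapses to zero, or that the reduction destroys the triangularity exploited in Step 2. This is exactly where the hypothesis $m > \tfrac{q+5}{2}$ (i.e.\ $r > q+6$) and the coprimality of $r$ and $q$ enter: coprimality guarantees the indices $ql \pm 1 \pmod r$ behave well (in particular $ql+1 \not\equiv 0$ and the map $l \mapsto ql \bmod r$ is injective on the relevant range), and the size condition on $m$ ensures the small values of $l$ we care about do not ``wrap around'' in a way that breaks the triangular structure. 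Controlling these reductions carefully — presumably via the two technical lemmas advertised before the proposition — is the crux, and I would expect the bulk of the real work to live there rather than in the algebraic manipulations of Steps 1 and 2.
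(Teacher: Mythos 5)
Your Step 1 is correct and matches the paper's derivation of \eqref{Coeffofl}: pairing $k$ with $-k$ in the cabling formula, using $e_{-j}=-e_j$, substituting $l=2k$, and simplifying the exponent to $-p(\tfrac{q}{2}l^2+l)$ is exactly how one arrives at the expansion in terms of the $f_l$, and passing to the quotient replaces $f_l$ by $\Tilde f_l$.

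Step 2, however, has a genuine gap. You consider the matrix, call it $A$, whose $(l,i)$-entry is the coefficient of $\Tilde f_l$ in $RT_r(C_{p,q})(e_i)$, note that it is triangular with nonzero diagonal (after ordering the $e_i$ as odd indices then even indices), and conclude invertibility of $A$. But that does not imply $\Tilde F_m$ is a basis. Writing $R_m$ for the $m\times m$ matrix whose columns are the $\Tilde f_l$ expressed in the orthonormal basis $\{e_1,\dots,e_m\}$, formula \eqref{Coeffofl} says precisely $RT_r(C_{p,q}) = R_m A$. Your Step 2 establishes $A$ is invertible; it says nothing about $R_m$. Consequently, the image of $RT_r(C_{p,q})$ equals $\operatorname{Span}(\Tilde F_m)$, but the parenthetical ``(as the image has dimension $m$)'' is unjustified and is essentially circular — you are implicitly assuming $RT_r(C_{p,q})$ has full rank, which is exactly what is at stake. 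Indeed the paper shows $RT_r(C_{p,q})$ is singular whenever $(r,q)>1$, while your matrix $A$ is triangular with nonzero diagonal for \emph{all} $r,q$; so invertibility of $A$ alone cannot possibly yield the proposition, and the coprimality hypothesis would never enter your argument. The real content, which the paper supplies via Lemmas~\ref{MatLem1} and~\ref{MatLem2} and the case-by-case cofactor expansion of $\det R_m$, is to show directly that $R_m$ is nonsingular by controlling which entries of $R_m$ are nonzero (the reductions $e_{ql\pm 1}\mapsto e_j$ or $e_{r-j}$ can cause overlaps) and verifying the expansion terminates with all factors nonzero. You correctly sense in your final paragraph that the crux lies in these reductions, but Step 2 as written tries to sidestep that analysis rather than carry it out, and the sidestep does not work.
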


The idea of the proof is to leverage symmetric properties of the $\Tilde{f}_l$ to give a
presentation of $R_m^{-1}$ and bound its operator norm. The assumption that $(r,q)=1$ is necessary
for invertibility, as indicated by the following proposition.

\begin{prop}\label{RNotInvert} Suppose $r =2m+1 \geq 3$ is odd and not coprime to $q$. Then $R_m$ is
singular.
\end{prop}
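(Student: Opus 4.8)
The plan is to show that when $r$ and $q$ share a common factor $d > 1$, the reduced vectors $\tilde{f}_l$ fail to span $RT_r(T^2)$; equivalently, the columns of $R_m$ are linearly dependent. The key observation is that the reduction rules \eqref{eRed1} and \eqref{eRed2} identify $e_{ql\pm 1}$ with some $e_j$ for $0 \le j \le m$, and since $q l$ ranges over multiples of $q$, all the indices appearing in the $\tilde{f}_l$ are of the form $\pm ql \pm 1 \pmod r$. If $d = \gcd(r,q) > 1$, then every such index is congruent to $\pm 1 \pmod d$ after reducing modulo $r$ (because $ql \equiv 0 \pmod d$ and $r \equiv 0 \pmod d$), so the only basis vectors $e_j$ that can appear (with nonzero coefficient) in any $\tilde{f}_l$ are those with $j \equiv 1 \pmod d$ or $j \equiv -1 \equiv r-1 \pmod d$. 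Since $d \ge 3$ is odd (as $r$ is odd, $d \mid r$ forces $d$ odd), these two residue classes $\{1, d-1\}$ mod $d$ do not exhaust $\{1, \dots, m\}$: there exists some $j_0 \in \{1,\dots,m\}$ with $j_0 \not\equiv \pm 1 \pmod d$ — for instance $j_0 = 2$ works whenever $d > 3$, and when $d = 3$ one can take $j_0 = 3$ (valid since $m \ge 3$ when $r \ge 7$; the small cases $r = 3$, where $m=1$, and related edge cases must be checked separately, but for $r=3$ the matrix $R_1$ is $1\times 1$ with entry from $f_0 = e_1$, and $q$ divisible by $3$... actually $r=3$ forces the common factor argument to be examined directly).

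Concretely, I would argue as follows. First I would record precisely which $e_j$ can appear in $\tilde{f}_l = \tilde{e}_{ql+1} - C_l \tilde{e}_{ql-1}$: namely, writing $ql+1 = kr + s$ with $0 \le s < r$, one gets $\tilde{e}_{ql+1} = \pm e_s$ if $s \le m$ and $\pm e_{r-s}$ if $s > m$; in either case the surviving index lies in $\{ql+1, -(ql+1)\} \bmod r$ reduced into $\{0,\dots,m\}$, hence is $\equiv \pm 1 \pmod d$. The same holds for $\tilde{e}_{ql-1}$. Therefore every column of $R_m$ — indexed by the rows $e_1, \dots, e_m$ — has support contained in the row-index set $J := \{ j \in \{1,\dots,m\} : j \equiv 1 \text{ or } j \equiv -1 \pmod d \}$. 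Then $\operatorname{rank}(R_m) \le |J|$. The final step is the counting estimate: I need $|J| < m$, i.e., that the residues $\pm 1 \bmod d$ leave out at least one value in $\{1,\dots,m\}$. Since $d \ge 3$, among any $d$ consecutive integers at most $2$ lie in $J$ (the residues $1$ and $d-1$, which coincide only if $d = 2$, excluded), and $m = \frac{r-1}{2} \ge \frac{d-1}{2}\cdot(\text{something})$... more carefully, $m \ge d-1 > 2$ will hold in all relevant cases since $r \ge 3d$ would be the worst; but actually $r$ could equal $d$ exactly (if $q$ is a multiple of $r$), in which case $m = \frac{d-1}{2}$ and $J = \{j \le m : j \equiv \pm 1\}$ which is just $\{1\}$ when $m < d-1$, certainly smaller than $m$ once $m \ge 2$, i.e. $d \ge 5$; the case $d = r = 3$, $m = 1$ needs $q$ a multiple of $3$ and is handled by noting $\tilde f_0 = e_1$ spans, but then $R_1$ is actually nonsingular — so I should double check that the proposition's hypothesis genuinely excludes or correctly treats $r = 3$. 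Most likely the intended reading is $r \ge 3$ with $d = \gcd(r,q) \ge 3$ and $d < r$ or $m$ large enough, and the edge case is vacuous or trivial; I would state the argument for $m \ge 2$ cleanly and dispatch $m = 1$ by inspection.

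The main obstacle I anticipate is bookkeeping the reduction rules and signs carefully enough to be certain that \emph{no cancellation} conspires to keep a column's support inside $J$ while the union over all columns still somehow spans — but that worry is unfounded, since the point is exactly that the union of supports lies in $J$, so $\operatorname{rowspan}$ of $R_m^T$ (equivalently the column space of $R_m$) is contained in $\operatorname{span}\{e_j : j \in J\}$, a proper subspace; linear dependence of $m$ vectors in a space of dimension $|J| < m$ is immediate. So the real work is purely the residue-counting lemma showing $|J| < m$ under the stated hypotheses, together with the verification that $d = \gcd(r,q)$ is odd and at least $3$. I would organize the proof as: (1) $d := \gcd(r,q)$ is odd and $\ge 3$; (2) support of each $\tilde f_l$ lies in $J = \{j \in \{1,\dots,m\}: j \equiv \pm 1 \pmod d\}$; (3) $|J| < m$; (4) conclude $R_m$ has rank $< m$, hence is singular.
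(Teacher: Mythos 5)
Your approach is essentially the same as the paper's. The paper observes that for $d=\gcd(r,q)>1$ the row $\mathrm{row}(nd)$ of $R_m$ is identically zero (because $ql\pm 1\equiv\pm 1\pmod d$ so the reduced index can never be a multiple of $d$), which is exactly your step (2) restricted to the single residue class $0\bmod d$; you state the stronger form that the support of every column lies in $J=\{j:j\equiv\pm1\pmod d\}$ and then argue via rank, but the content is identical. Your version is in fact a little more robust: the paper needs some $nd\le m$ to exist, which fails when $d=r$ (i.e.\ $r\mid q$), whereas your formulation handles that case since $|J|=1<m$ as soon as $m\ge 2$. You are also right to be suspicious of $r=3$: when $m=1$ and $3\mid q$ the matrix $R_1=[1]$ is nonsingular, so the proposition as literally stated fails in that degenerate case; both the paper's proof and yours silently require $m\ge 2$. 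Since Proposition~\ref{RNotInvert} is only invoked in the regime $r>q+6$ where $m$ is large, this edge case is harmless for the main theorem, but it is worth flagging as you did.
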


The proofs of Propositions \ref{2NBasisSpans} and \ref{RNotInvert} will be given in Subsection
\ref{subsec: suppProp}. 

\subsection{Proof of Theorem \ref{RTinvertible}}\label{Subsec: RTinvertible}

We now can proceed with the proof of Theorem \ref{RTinvertible} assuming Proposition
\ref{2NBasisSpans}.

\begin{proof}[Proof of Theorem \ref{RTinvertible}]
We begin with the necessary condition. Suppose $(r,q) = d>1$. Then there are coprime $q',r'$ such
that $q=dq'$ and $r=dr'$. We claim that $row(nd)$ of $RT_r(C_{p,q})$ consists of only zeros for each
$n$. Suppose some $e_{ql\pm 1} = e_{kr+j}$, where $0 \leq j \leq m$, reduces to $e_j = e_{nd}$. Then
by Equation (\ref{eRed1}), $ql-kr\pm 1 = nd$, which means $d(q'l-r'k-n) = \mp 1$, which is a
contradiction. Similarly, if $e_{ql\pm 1} = e_{kr+j}$, where $m+1 \leq j <r$, reduces to $e_{r-j} =
e_{nd}$. Then by Equation (\ref{eRed2}), $d((1+k)r'-q'l-n) = \pm 1$, which is also a contradiction.
This means that $row(nd) = [0,\dots, 0]$, thus $RT_r(C_{p,q})$ is singular.

For sufficiency, suppose $(r,q) = 1$. By Proposition \ref{2NBasisSpans}, we can write
$RT_r(C_{p,q})$ as a product of two diagonal matrices with an upper-triangular matrix and the change
of basis $R_m$:

\begin{align*}
    RT_r(C_{p,q}) = & R_m \left( \begin{array}{cccc}
    1 & 0 & \dots & 0 \\
    0 & A^{-p\left((2-1)+\frac{q}{2}(2-1)^2\right)} & \ddots & \vdots \\
    \vdots & \ddots & \ddots & 0 \\
    0 & \dots & 0 & A^{-p\left((m-1)+\frac{q}{2}(m-1)^2\right)}
    \end{array} \right) \times \\
    &\left( \begin{array}{ccccccc}
    1 & 0 & 1 & 0 & 1 & 0 & \dots\\
    0 & 1 & 0 & 1 & 0 & 1 & \dots \\
    & & 1 & 0 & 1 & 0 & \dots\\
    \vdots & & & 1 & 0 & 1 & \dots\\
    & & & & 1 & 0 & \\
    & & & & & \ddots  & \\
    0 & & \dots & & & 0 & 1
    \end{array} \right)
    \left( \begin{array}{cccc}
    1 & 0 & \dots & 0 \\
    0 & A^{\frac{qp}{2}(2^2-1)} & \ddots & \vdots \\
    \vdots & \ddots & \ddots & 0 \\
    0 & \dots & 0 & A^{\frac{qp}{2}(m^2-1)}
    \end{array} \right).
\end{align*}
Note that the columns of the middle upper triangular matrix correspond to the index sets $T_i$ of
the sum in Equation (\ref{Coeffofl}). Inverting this product, we have
\begin{align*}
   RT_r(C_{p,q})^{-1} =& \left( \begin{array}{cccc}
    1 & 0 & \dots & 0 \\
    0 & A^{\frac{qp}{2}(1-2^2)} & \ddots & \vdots \\
    \vdots & \ddots & \ddots & 0 \\
    0 & \dots & 0 & A^{\frac{qp}{2}(1-m^2)}
    \end{array} \right)
    \left( \begin{array}{ccccccc}
    1 & 0 & -1 & 0 & 0 &\dots & 0\\
    0 & 1 & 0 & -1 & 0 & \dots &0\\
     & & 1 & 0 & -1 & 0& \vdots\\
    \vdots & & & 1 & 0 & -1 \dots & 0\\
     & & & & 1 & \ddots & -1\\
     & & & & & \ddots & 0\\
    0 & & &\dots  & & 0 & 1
    \end{array} \right) \\
    & \times \left( \begin{array}{cccc}
    1 & 0 & \dots & 0 \\
    0 & A^{p\left((2-1)+\frac{q}{2}(2-1)^2\right)} & \ddots & \vdots \\
    \vdots & \ddots & \ddots & 0 \\
    0 & \dots & 0 & A^{p\left((m-1)+\frac{q}{2}(m-1)^2\right)}
    \end{array} \right) R_m^{-1}.
\end{align*}
By Proposition \ref{2NBasisSpans}, $|||R_m^{-1}|||$ grows at most polynomially in $m$, so it is
bounded polynomially in $r$. For the total bound, observe that both of the diagonal matrices are
isometries, and the upper triangular matrix has operator norm bounded above by a polynomial in $r$
by the Cauchy-Schwartz inequality 
\end{proof}

\subsection{Proof of Propositions \ref{2NBasisSpans} and \ref{RNotInvert}}\label{subsec: suppProp}

We give proofs of Propositions \ref{2NBasisSpans} and
\ref{RNotInvert} in this subsection. The following definitions and lemmas will be useful in the proofs.

By the symmetries $e_{-i} = -e_i$ for any $i \geq 0$ and $e_{i+kr} = (-1)^ke_i$ for any $k \in
\mathbb{Z}$, we may extend the definition of $f_l$ to all $l\in \mathbb{Z}$ using the following symmetries:
\begin{itemize}
    \item $f_l = e_{1+ql} + A^{2pl} e_{1-ql}$ for any $l \in \mathbb{Z}$,
    \item $f_{l+r} = (-1)^qf_l$ for any $l \in \mathbb{Z}$, and 
    \item $f_l = A^{2pl} f_{-l}$.
\end{itemize}

The following Lemma will be used to present $R_m^{-1}$.

\begin{lem}\label{e_i_as_f_ls} Let $r = 2m+1 \geq 3$ be coprime to $q$, and let $q^*$ be the inverse
    of $q$ modulo $r$. Then for $l \in \{0, \dots, m-1 \}$,  
    \begin{equation}\label{e_as_f_eqn}
        e_{l+1} = 
        \begin{cases}
           f_0 & \text{ if } l=0 \\
          f_{q^*} & \text{ if } l=1 \\ f_{q^*l}+\sum_{k=1}^{\lfloor l / 2 \rfloor} A^{2pq^*\left(kl- \sum_{i=0}^{k-1} 2i\right)} f_{q^* (l-2k)} & \text{ if } l>1.
        \end{cases}
    \end{equation}
Moreover, for $i,j \in \{0, \dots, r-1\}$, $q^*i \equiv q^*j \mod r $ if and only if $i = j$.
\end{lem}

\begin{proof}
    Since $(r,q)=1$, there is a unique $q^* \in \mathbb{Z}_r$ such that $qq^* \equiv 1 \mod r$.
    Using the symmetries of $f_l$ and substituting in $q^*l$, we have
    \begin{align}\label{recursive_e_i}
        e_{1+l} &= f_{q^*l} - A^{2pq^*l} e_{1-l} = f_{q^*l} + A^{2pq^*l} e_{l-1} = f_{q^*l} + A^{2pq^*l} e_{1 + (l-2)}.
    \end{align}
    We can then apply Equation (\ref{recursive_e_i}) iteratively to express the $e_i$'s in terms of
    the $f_j$'s.
    

\begin{align*}
    e_{l+1} =f_{q^* l} + A^{2pq^* l}f_{q^* (l-2)} + A^{2pq^* (2l-2)}f_{q^* (l-4)} + \cdots + A^{2pq^*\left(\lfloor l / 2 \rfloor l- \sum_{i=0}^{\lfloor l / 2 \rfloor-1} 2i\right)}f_{q^*(l-2\lfloor l / 2 \rfloor)}
\end{align*}    
    for $l \in \{2, \dots, m-1 \}$. When $l=0$, by definition, 
    $e_1=f_0.$
    When  $l=1$, 
    Equation (\ref{recursive_e_i}) yields that
    $e_2 = f_{q^*}.$
    For any $l \in \{0, \dots, m-1 \}$,
    the iterative use of Equation (\ref{recursive_e_i}) to express $e_{l+1}$ terminates when the
    final term is a scalar multiple of either $e_1$ or $e_2$, depending on the parity of $l$. 
    
    For the final statement, note that $(q^*, r)=1$. This means there is a group isomorphism between the cyclic groups $\{q^*k \mod r| k \in \mathbb{Z}_r\}$ and $\mathbb{Z}_r$ sending the indices $q^*k \mod r$ in Equation (\ref{e_as_f_eqn}) to distinct $j$ for $j \in \{0, 1, \dots, r-1 \}$. Since $q^*k \mod r$ are distinct for $k\in \mathbb{Z}_r$, this shows that $q^*i \equiv q^*j \mod r$ if and only if $i=j$. 
\end{proof}

In order to prove Proposition \ref{2NBasisSpans}, we will use the following lemma. The proof of this
lemma is given in Subsection \ref{subsec: TechLemmaPfs}. 

\begin{lem}\label{f_m_in_span} Suppose $r=2m+1 \geq 3$ is coprime to $q$. Then 
    \[
    \Tilde{f}_m = \sum_{j=0}^{m-1} C_j f_j,
    \]
    where $C_j \in \mathbb{C}$ such that $|C_j|= 1$ for $j \in \{0, \dots,  m-1\}$.
\end{lem}

\begin{proof}[Proof of Proposition \ref{2NBasisSpans}]
It suffices to show that $R_m$ is nonsingular, in which case $R_m$ corresponds to the basis
transformation $\Tilde{F}_m \rightarrow \{e_1,\dots,e_m\}$. To establish nonsingularity, we will
give a presentation of $R_m^{-1}$ by expressing $e_i$, for $i \in \{1, \dots, m\}$, in terms of $f_j$ where $j \in \{0,
\dots m-1\}$.

By Lemma \ref{e_i_as_f_ls}, each $e_i$, for $i \in \{1,\dots, m\}$, can be written in terms of $f_j$ where $j\in
\mathbb{Z}$. These $f_j$'s reduce to $f_l$'s, where $l \in \{0, \dots, m\}$, using the above
symmetries. This means that $Span\{e_1, \dots, e_m\}$ of dimension $m$ is contained in $Span\{f_0,
\dots , f_m\}$, a vector space of dimension at most $m+1$. 

Lemma \ref{f_m_in_span} implies that $\Tilde{f}_m \in Span\{f_0, \dots, f_{m-1}\}$, which means that
$$Span\{f_0, \dots, f_{m-1}\} = Span\{f_0, \dots, f_{m}\} \supseteq Span\{e_1, \dots, e_m\}.$$ 
Since $\{e_1, \dots, e_m\}$ is a basis for the $m$-dimensional vector space $RT_r(T^2)$, $\{f_0, \dots, f_{m-1}\}$ is a set of $m$ vectors, and $Span\{f_0, \dots, f_{m-1}\} \supseteq Span\{e_1, \dots, e_m\}$, then
$$Span\{f_0, \dots, f_{m-1}\} = Span\{e_1, \dots, e_m\}= RT_r(T^2).$$
From this, we conclude that $\{f_0, \dots, f_{m-1}\}$ is also a basis for $RT_r(T^2)$. Since $\{f_0, \dots, f_{m-1}\}$ is a basis, this implies  that $R_m^{-1}$ is a change-of-basis matrix and is nonsingular, therefore, $R_m$ is nonsingular. 

In order to bound the operator norm $|||R_m^{-1}|||$, we study the presentation of $R_m^{-1}$ more
closely. By Lemma \ref{e_i_as_f_ls}, we may express each $e_i$ as
$$e_i = \sum_{j=0}^{r-1} B_j^i f_j,$$
where $B_j^i$ is either zero or a root of unity and the summands correspond to the reduction of each index modulo $r$. We remark that since $B_j^i$ is either zero or a root of unity, $|B_j^i| \leq 1$. Now after applying the symmetry $f_l = A^{2pl} f_{-l} = A^{2pl} f_{r-l}$ for any $l>m$, we may express $e_i$ as
$$e_i = \sum_{j=0}^{m} (B_j^i + A^{2p(r-j)}B_{r-j}^i)f_j = \sum_{j=0}^{m} D_j^i f_j,$$
where $D_j^i = B_j^i + A^{2p(r-j)}B_{r-j}^i$ and $|D_j^i| \leq 2$. Additionally, by Lemma \ref{f_m_in_span}, we know that the coefficient of any summand of $f_m$ in terms of the basis $\{f_0, \dots, f_{m-1} \}$ is $C_j^i$ with $|C_j^i| =1$. This means we may write 

\begin{align*}
e_i &= \left(\sum_{j=0}^{m-1} D_j^i f_j\right) + D_m^i f_m \nonumber\\
&= \left(\sum_{j=0}^{m-1} D_j^i f_j\right) + D_m^i \left(\sum_{k = 0}^{m-1} C_k^i f_k\right) \nonumber\\
&= \sum_{j=0}^{m-1} \left[D_m^i C_j^i + D_j^i \right] f_j. \nonumber\\
& = \sum_{j=0}^{m-1} E_j^i f_j
\end{align*}
where $E_j^i = D_m^i C_j^i + D_j^i.$
Note that since $|D_j^i| \leq 2$ and $|C_j^i|=1$, we have
$$ | E_j^i| = |D_m^i C_j^i + D_j^i| \leq |D_m^i C_j^i| + |D_j^i| = |D_m^i||C_j^i| + |D_j^i| \leq 4.$$
Hence every entry of $R_m^{-1}$ has modulus bounded above by $4$. For any complex unit vector $v = [v_0, \dots, v_{m-1}]^T$ such that $|v_i| \leq 1$ for $i \in \{0, \dots m-1\}$, the Cauchy-Schwartz inequality implies that 
\begin{align*}
\|R_m^{-1} v \| & = \left\| \left[\sum_{i=0}^{m-1}E_0^iv_{i}, \dots, \sum_{i=0}^{m-1}E_{m-1}^iv_i \right]^T \right\| \\
& = \left(\left|\sum_{i=0}^{m-1}E_0^iv_{i}\right|^2 + \dots + \left|\sum_{i=0}^{m-1}E_{m-1}^iv_i\right|^2 \right)^{\frac{1}{2}} \\
& \leq \left(\sum_{i,j=0}^{m-1}\left|E_j^i\right|^2 \left|v_i\right|^2 \right)^{\frac{1}{2}} 
\leq \left(\sum_{i,j=0}^{m-1}\left|E_j^i\right|^2  \right)^{\frac{1}{2}} \\
& \leq \left(\sum_{i,j=0}^{m-1} 4^2  \right)^{\frac{1}{2}} =
 \left(\sum_{i,j=0}^{m-1} 16 \right)^{\frac{1}{2}} = \left(16m^2\right)^\frac{1}{2} = 4m.
\end{align*}
This shows that 
\[
||R_m^{-1} v|| \leq O(m),
\]
so the operator norm $|||R_m^{-1}|||$ is bounded polynomially.

Lastly, by the Cabling Formula in Theorem \ref{CablingFormula} and the definition of $f_l$, the coefficient of $f_l$ in $RT_r(C_{p,q})(e_i)$ is given by
\begin{align*}
RT_r(C_{p,q})(e_i) =& A^{\frac{qp}{2}(i^2-1)} \sum_{l \in T_i} A^{-p\left(\frac{q}{2}l^2 + l\right)} \Tilde{f}_l,
\end{align*}
where $T_i = \{0,2,\dots, i-1\}$ for odd $i$ and $T_i = \{1,3,\dots, i-1\}$ for even $i$.
\end{proof}

In order to prove Proposition \ref{RNotInvert}, we establish the following definitions.

For $1 \leq l \leq m$, define $f_l^{\pm} := e_{ql \pm 1}$. Observe that $f_l = f_l^+ - A^{2pl}
f_l^-$ for $1\leq l \leq m$. In addition, define $\Tilde{f}_l^{\pm}$ to be the quotient of
$f_l^{\pm}$ under the symmetries $e_{-i} = -e_i$ for any $i \geq 0$ and $e_{i+kr} = (-1)^ke_i$ for
any $k \in \mathbb{Z}$. We will use the convention that $f_0^+ = f_0 = e_1$ and $f_0^- = 0$.

Recall that for each $l$, $ql\pm 1 = kr+j$ for some non-negative integers $k, j$ where $0 \leq j
<r$. This means that up to sign, 
\begin{align}\label{fpm}
\Tilde{f}_l^{\pm} = 
\begin{cases}
    e_j = e_{ql-kr\pm 1} & 0 \leq j\leq m \\
    e_{r-j} = e_{(k+1)r-ql\mp 1} & m+1 \leq j< r.
\end{cases} 
\end{align}

We can now prove Proposition \ref{RNotInvert}.

\begin{proof}[Proof of Proposition \ref{RNotInvert}]
We will repeat the argument given in the proof of Theorem \ref{RTinvertible}. Suppose $(r,q) = d>1$,
then there are coprime $q'$ and $r'$ such that $q=dq'$ and $r=dr'$. We claim that $row(nd)$ of $R_m$
consists of only zeros for each $n$. Suppose some $\Tilde{f}^{\pm}_l = e_j = e_{nd}$, then $ql-kr\pm
1 = nd$. This implies that $d(q'l-r'k-n) = \mp 1$  which is a contradiction. Similarly, if
$\Tilde{f}^{\pm}_l = e_{r-j} = e_{nd}$, then $d((1+k)r'-q'l-n) = \pm 1$ which is also a
contradiction. This means that $row(nd) = [0,\dots, 0]$, thus $R_m$ is singular. 
\end{proof}

\subsection{Proof of Lemma \ref{f_m_in_span}}\label{subsec: TechLemmaPfs} In this subsection, we provide a
proof for Lemma \ref{f_m_in_span}. We use the notation introduced in Subsection \ref{subsec:
suppProp}.

\begin{remark}
For the following arguments, we use the convention that equalities between vectors $e_i$ are
necessarily taken up to sign. This ultimately has no effect on the arguments for Proposition
\ref{2NBasisSpans} and Theorem \ref{RTinvertible}.
\end{remark}


Recall $R_m$ is the $(m \times m)$-matrix with columns corresponding to $\Tilde{F}_m = \{f_0, \dots,
f_{m-1} \}$. We also define $S_m$ to be the $(m \times (m+1))$-matrix obtained by appending the
column corresponding to $\Tilde{f}_m$ to $R_m$. The following technical lemmas will be used in the
proof of Lemma \ref{f_m_in_span}.

\begin{lem}\label{matrixlem} Suppose $r=2m+1 \geq 3$ is coprime to $q$, and let $q^*$ be the multiplicative inverse
    of $q$ in the ring $\mathbb{Z}_r$. Then
    \begin{enumerate}[(i)]
        \item Each column of $S_m$ has at most two nonzero entries. Moreover, for each column with
        two nonzero entries, their corresponding row indices differ by at most $2$.
        \label{matrixlem_i} 
        \item Let 
        \begin{equation*}
        l^* :=
            \begin{cases}
                q^* & \text{if } q^* \leq m\\
                r - q^* & \text{if } q^* > m.
            \end{cases}
        \end{equation*}
         Then in $S_m$, $col(1) = [1,0,\dots,0]^T$ and $col(l^*+1) = [0,D_{l^*},0,\dots,0]^T$ where
         $D_{l^*}$ is a root of unity. Moreover, every other column of $S_m$ has exactly two nonzero
         entries which are roots of unity. \label{matrixlem_ii}
    \end{enumerate}
\end{lem}

\begin{proof}
    \textbf{Part $(\ref{matrixlem_i})$:} Each column of $S_m$ corresponds to the reduced vector
    $\Tilde{f}_l$, $0\leq l \leq m$. Since $f_l$ is a linear combination of at most two vectors in
    $Span\{e_1, e_{ql+1}, e_{ql-1}\}_{l=1}^{m-1}$, there are at most two nonzero entries in
    $col(l+1)$.

Now suppose the index of $f_l^+$ is $ql+1 = kr+j$, where $0 \leq j <r$. Then the index of $f_l^-$ is
$ql-1 = kr+j-2 = k'r+j'$, where either $(k',j')=(k-1,r+j-2)$ (for $j\in \{0,1\}$) or
$(k',j')=(k,j-2)$ (for $j\geq 2$). We split into cases:
\begin{itemize}
    \item If $j=0$, then $j' = r-2$, $\Tilde{f}_l^+ = e_0 = 0$, and $\Tilde{f}_l^- = e_2$. 
    \item If $j=1$, then $j'=r-1$, $\Tilde{f}_l^+ = e_{ql-kr + 1} = e_1$, and $\Tilde{f}_l^- =
    e_{(k+1)r-ql +1} = e_{1}$. This implies that $l = \frac{kr}{q}$. Since $l \in \mathbb{Z}$ and
    $r,q $ are  coprime, $k = qn$, for some $n\geq 0$. However, if $n\geq 1$, we have $l \geq
    1+r>m$, which is a contradiction. Thus $n=0$, so $l=0$, corresponding to $col(1)$.
    \item If $2\leq j \leq m$, then $\Tilde{f}_l^+ = e_j \neq e_{j-2} = \Tilde{f}_l^-$.   
    \item If $j=m+1$, then $j' = m-1$ and $\Tilde{f}_l^+ = e_{m} \neq e_{m-1} = \Tilde{f}_l^-$.
    \item If $j=m+2$, then $j' = m$ and $\Tilde{f}_l^+ = e_{m-2} \neq e_{m} = \Tilde{f}_l^-$.
    \item If $m+3 \leq j <r$, then $\Tilde{f}_l^+ = e_{r-j} \neq e_{r-j+2} = \Tilde{f}_l^-$.
\end{itemize}
This implies that the row indices of the nonzero entries in each column differ by at most two for
every column except $col(1)$. In particular, the only case where the row indices differ by exactly
$1$ occurs when $j = m+1$.

\textbf{Part $(\ref{matrixlem_ii})$:} Since $e_{-i} = -e_i$, we have $f_l = e_{1+ql} + A^{2pl}
e_{1-ql}$ for $1 \leq l \leq m$. Note that $col(l+1)$ has exactly one nonzero entry if and only if
one of the following occurs:
\begin{enumerate}[(1)]
    \item Either $1 + ql$ and $1 - ql$ are   equal or opposite modulo $r$.
    \item Either $1 + ql$ or $1 - ql$ vanishes modulo $r$.
\end{enumerate}

Case $(1)$ occurs if and only if $l=0$, corresponding to $\Tilde{f}_0 = f_0 = e_0$. In this case,
$col(1) = [1, 0, \dots, 0]^T$. 

Case $(2)$ occurs if and only if either $l = q^*$ or $l = -q^*$ modulo $r$. Define     
\begin{equation*}
    l^* :=
        \begin{cases}
            q^* & \text{if } q^* \leq m\\
            r - q^* & \text{if } q^* > m.
        \end{cases}
    \end{equation*}
Note that if $ql \pm 1$ vanishes, then $|ql \mp 1| = 2$. Define $D_{l^*}$ to be the coefficient of
the vector $e_{|ql \mp 1|} = e_2$ obtained from Equation (\ref{Coeffofl}). This means that $col(l^*+1)$ is
the unique column with exactly one nonzero entry except for $col(1)$. 

Finally, the conclusion follows from the uniqueness of $l^*$ and Part $(\ref{matrixlem_i})$.
\end{proof}

The second technical lemma makes use of Lemma \ref{matrixlem} in its proof.

\begin{lem}\label{matrixlem2} Suppose $r = 2m+1 \geq 3$ is coprime to $q$. Then 
    \begin{enumerate}[(i)]
    \item Each row of $S_m$ has exactly two nonzero entries. \label{matrixlem2_i}
    
    \item There is a unique $l'$, $1 \leq l' \leq m$, such that $col(l'+1) =
    [0,\dots,0,D_{l'},E_{l'}]^T$, where $D_{l'},E_{l'}$ are roots of unity.
    \label{matrixlem2_ii} 
    \end{enumerate}
\end{lem}

The following lemma will be useful in the proof of Lemma \ref{matrixlem2}.

\begin{lem}\label{indlem} Suppose $r \geq 3$ is coprime to $q$, and let $g_l^{\pm}:= ql-kr\pm 1$ and
$h_l^{\pm} = (1+k)r-ql \mp 1$. Then for $0 \leq l_1, l_2 \leq m$ with $l_1 \neq l_2$, 
\begin{enumerate}[(i)]
    \item $g_{l_1}^{\pm} = g_{l_2}^{\pm}$, $g_{l_1}^{\pm} = h_{l_2}^{\mp}$, and $h_{l_1}^{\pm} =
    h_{l_2}^{\pm}$ do not have integer solutions, \label{indlem(a)}
    \item $g_{l_1}^{\pm} = g_{l_2}^{\mp}$, $g_{l_1}^{\pm} = h_{l_2}^{\pm}$, and $h_{l_1}^{\pm} =
    h_{l_2}^{\mp}$ may each have integer solutions. \label{indlem(b)}
\end{enumerate}
\end{lem}

\begin{proof}
    Note $g_l^{\pm}$ and $h_l^{\pm}$ encode the two families of indices of the reduced vectors
    $\Tilde{f}_l^{\pm}$ given in Equation (\ref{fpm}). There are six equations relating pairs of
    expressions in $\{g_l^+, g_l^-, h_l^+, h_l^-\}$.
   
    \textbf{Part} $(\ref{indlem(a)})$: This follows from the fact that $(r,q)=1$ and the
    bounds on $l_1$ and $l_2$. We show the case $g_{l_1}^{\pm} = g_{l_2}^{\pm}$ and note that the other two cases follow analogously. Assume for distinct $l_1, l_2 \in \{0, \dots, m\}$ and $k_1, k_2\in \mathbb{Z}$ that $ql_1 - k_1 r \pm 1 = ql_2 - k_2 r \pm 1$. This implies 
    \[
    k_1-k_2 = \frac{q(l_1-l_2)}{r} \in \mathbb{Z}.
    \]
    Since $l_1 \neq l_2$ and $(r,q)=1$, $l_1-l_2$ must have a nontrivial factor of $r$, which contradicts the bounds on $l_1$ and $l_2$.
   
    \textbf{Part} $(\ref{indlem(b)})$: We have the following:
    \begin{itemize}
       \item $g_{l_1}^{\pm} = g_{l_2}^{\mp}$ if and only if $q(l_1-l_2) = (k_1-k_2)r \mp 2$,
       \item $g_{l_1}^{\pm} = h_{l_2}^{\pm}$ if and only if $q(l_1+l_2) = (1+k_1+k_2)r \mp 2$, and
       \item $h_{l_1}^{\pm} = h_{l_2}^{\mp}$ if and only if $q(l_1-l_2) = (k_1-k_2)r \mp 2$.
   \end{itemize}
   All three of these equations may have integer solutions for $l_1,l_2 \in \{0, \dots, m\}$.
\end{proof}

\begin{proof}[Proof of Lemma \ref{matrixlem2}] 
    We will use the same notation as in the proof of Lemma \ref{matrixlem} and in Lemma
    \ref{indlem}. 
    
    \textbf{Part $(\ref{matrixlem2_i})$:} It is a corollary of Lemma \ref{indlem} that every row of
    $S_m$ has at most two nonzero entries. In particular, let $(l_1, l_2)$ be an integral solution
    to one of the equations of Lemma \ref{indlem} Part $(\ref{indlem(b)})$. Suppose $l_3 \in \{0,
    \dots, m-1\}$ is such that $(l_1, l_3)$ and $(l_2, l_3)$ are both solutions to equations in
    Lemma \ref{indlem} Part $(\ref{indlem(b)})$. Then by Lemma \ref{indlem} Part
    $(\ref{indlem(a)})$, either $l_3 = l_1$ or $l_3 = l_2$.

    Note that by Lemma \ref{matrixlem} Part $(\ref{matrixlem_ii})$, $S_m$ has exactly $2m$ nonzero entries since there are 2 in each
    column other than $col(1)$ and $col(l^*+1)$, which each have exactly 1. This means that every
    row of $S_m$ must have exactly 2 nonzero entries.

    \textbf{Part $(\ref{matrixlem2_ii})$:} In the proof of Lemma \ref{matrixlem} Part
    $(\ref{matrixlem_i})$, we saw that the only value of $j$ corresponding to a column with the
    nonzero row entry indices differing by 1 is $j=m+1$. By Part $(\ref{matrixlem2_i})$,  $row(m)$ of
    $S_m$ has exactly 2 nonzero entries. This implies that there are some $l_1,l_2$ such that
    $col(l_1+1)$ has nonzero entries in $row(m)$ and $row(m-1)$ and $col(l_2+1)$ has nonzero entries
    in $row(m)$ and $row(m-2)$. Take $l' = l_1$. Finally, define $D_{l'}$ and $E_{l'}$ to be the
    coefficients of the vectors $e_{m-1}$ and $e_m$ defined by Equation (\ref{Coeffofl}),
    respectively. Note that if $m=2$, $l_2 = l^*$ and $col(l_2+1)$ has only 1 nonzero entry.
\end{proof}

Lastly, we are ready to prove Lemma \ref{f_m_in_span}.

\begin{proof}[Proof of Lemma \ref{f_m_in_span}]
The last column $col(m+1)$ of the matrix $S_m$ represents the reduced vector $f_m$ written in terms of the basis $\{e_1,\dots, e_m\}$. We will prove Lemma \ref{f_m_in_span} by showing that $col(m+1)$ can be written as a linear combination of the first $m$ columns. From this linear combination, we will see that the coefficients will have the required bounds from the statement. 



We claim that $col(m+1)$ of $S_m$ can be written as a linear combination of elements in $\{f_0, \dots, f_{m-1}\}$. From Lemma \ref{matrixlem} Part $(\ref{matrixlem_ii})$, if $l^*=m$, then $col(m+1)$ has exactly one nonzero entry, and if $l^* <m$, then $col(m+1)$ has exactly two nonzero entries.

\noindent \textbf{Case 1:}

We first consider the case $l^*=m$. Here, the nonzero entry of $col(m+1)$ lies in $row(2)$. This implies that $\Tilde{f}_m \in Span\{e_1, \dots, e_m\}$ has a scalar of $e_{2}$ as a summand. By Lemma \ref{matrixlem2} Part $(\ref{matrixlem2_i})$, we know that there is exactly one other nonzero entry in $row(2)$ in some column $j_1$. From the argument of Lemma \ref{matrixlem} Part $(\ref{matrixlem_i})$, there exists a nonzero entry in $row(4)$ of $col(j_1)$. Lemma \ref{matrixlem2} Part $(\ref{matrixlem2_i})$ implies there exists a nonzero entry in some column $j_2$ and $row(4)$. From the argument of Lemma \ref{matrixlem} Part $(\ref{matrixlem_i})$, there exists a nonzero entry in $row(6)$ of $col(j_2)$. Again, we pick the other nonzero entry of $row(6)$ which lies in some column $j_3$. Note that $col(j_3)$ cannot be equal to any of the previous columns. If it were a previous column, it would contradict our bound on the number of nonzero entries in a column. We continue this iteration until we reach either $row(m-1)$ or $row(m)$, depending on the parity of $m$. 

If $m-1$ is even, by Lemma \ref{matrixlem2} Part $(\ref{matrixlem2_ii})$, the next corresponding row with a nonzero entry will be $row(m)$ where $m$ is odd. Similarly, if $m$ is even, by Lemma \ref{matrixlem2} Part $(\ref{matrixlem2_ii})$, the next corresponding row with a nonzero entry will be $row(m-1)$ where $m-1$ is odd. Now when we continue the algorithm, our subsequent row indices will be odd and decrease by $2$ until we reach $row(1)$. By Lemma \ref{matrixlem2} Part $(\ref{matrixlem2_i})$ and Lemma \ref{matrixlem} Part $(\ref{matrixlem_ii})$, there exists a nonzero entry in $row(1)$ of $col(1)$, and it is the only nonzero entry in $col(1)$. Since every entry of our matrix is a root of unity by Lemma \ref{matrixlem} Part $(\ref{matrixlem_ii})$ and terminates at $row(1)$, scalars by roots of unity of the columns appearing in our sequence gives $f_m$  as a linear combination  of  elements of $\{f_0, \dots, f_{m-1}\}$ where all coefficients are roots of unity.  

\noindent \textbf{Case 2:}

Now suppose $col(m+1)$ has exactly two nonzero entries. We denote the row indices of these entries by $i_1^-$ and $i_1^+$, where $i_1^- < i_1^+$. By Lemma \ref{matrixlem2} Part $(\ref{matrixlem2_i})$, $row(i_1^-)$ has another nonzero entry in some other column $j_1^-$. Similarly, $row(i_1^+)$ has another nonzero entry in some column $j_1^+$. We make the following claim, which we prove at the end.

\noindent \textbf{Claim:} $j_1^- \neq j_1^+$. 

We will proceed similarly to the first case. Consider the column $col(j_1^+)$, which has exactly two nonzero entries and cannot correspond to either $col(1)$ or $col(l^*+1)$ since $i_1^+>2$. 
By Lemma \ref{matrixlem2} Part $(\ref{matrixlem2_i})$ and the claim, there exists another nonzero entry in some $row(i_2^+)$ of $col(j_1^+)$ such that $(i_2^+-i_1^+)\in \{-1,1,2\}$. The case when $(i_2^+-i_1^+) = -1$ corresponds to $i_1^+ = m$, and the case when $(i_2^+-i_1^+)=1$ corresponds to $i_1^+=m-1$. We now implement the same argument as the case with one entry in $col(m+1)$. Note that, in this procedure, we do not utilize any rows with index less than  $i_1^-$ with the same parity as $i_1^-$. If $i_1^-=m-1$ and $i_1^+=m$, they will  have different parities. In the other case, $i_1^-$ will have the parity of $i_k^+$ until we have a $k$ such that $(i_k^+-i_{k-1}^+) \in \{-1,1\}$. This implies that for all $k' \geq k$, $i_{k'}^+$ will have opposite parity to $i_1^-$. 

We now follow the same algorithm beginning with $row(i_1^-)$. By the claim, the indices of our subsequent rows $i_k^-$ must be decreasing. Otherwise, this would contradict Lemma \ref{matrixlem2} Part $(\ref{matrixlem2_i})$. 

Since both cases in total utilize every row exactly once, $f_m$ is given by a linear combination of elements of $\{f_0, \dots, f_{m-1}\}$ where, by Lemma \ref{matrixlem} Part $(\ref{matrixlem_ii})$, all coefficients are roots of unity.

\noindent \textbf{Proof of Claim:}

It now suffices to prove that $j_1^+\neq j_1^-$. By contradiction, let us assume that $j_1^+=j_1^-$, and we will denote $i_1=i_1^+$. 

If $i_1=m$, then either $i_1^-=m-1$ or $i_1^-=m-2$. If $i_1^-=m-1$, then since $j_1^+=j_1^-$, we will have two columns with nonzero entries in the last two rows. This contradicts Lemma \ref{matrixlem2} Part $(\ref{matrixlem2_ii})$, which states that there is a unique such column. If $i_1^-=m-2$, then there are two distinct columns with nonzero entries in $row(m-2)$ and $row(m)$. By Lemma \ref{matrixlem2} Part $(\ref{matrixlem2_ii})$, there must exist  a different column with nonzero entries in $row(m-1)$ and $row(m)$, which contradicts there being at most $2$ entries in $row(m)$. 

If $i_1=m-1$, then $i_1^-<m-1$, and there are no other columns with nonzero entries in $row(m-1)$ besides $col(j_1^+)$ and $col(m+1)$. By Lemma \ref{matrixlem2} Part $(\ref{matrixlem2_ii})$, there must exist  a different column with nonzero entries in $row(m-1)$ and $row(m)$, which contradicts there being at most $2$ entries in $row(m-1)$.

In the general case, we assume $i_1\leq m-2$, and we will define $i_2=i_1+2$.  Since $j_1^+=j_1^-$, $col(j_1^+)$ and $col(m+1)$ already have two nonzero entries. Since $i_2>2$, these entries cannot be in either $col(1)$ or $col(l^*+1)$ since they only have entries in the first two rows. This implies that the columns which correspond to nonzero entries in $row(i_2)$ must have exactly two nonzero entries in some columns $col(j_2^+)$ and $col(j_2^-)$ such that $j_2^+,j_2^- \notin \{j_1^+,m+1\}$. Since $row(i_1)$ has two nonzero entries in $col(j_1^+)$ and $col(m+1)$, the other nonzero entries in $col(j_2^+)$ and $col(j_2^-)$ must be in some $row(i_3)$, where $i_3 - i_2 \in \{-1, 1,2\}$.
\begin{itemize}
    \item If $i_3-i_2=-1$, we have $i_2=m$ and $i_3 = m-1$. Here, we reach the same contradiction as when $i_1=m$ and $i_1^-=m-1$.
    \item If $i_3-i_2 = 1$, then $i_2=m-1$ and $i_3 = m$. This gives the same contradiction as when $i_1=m$ and $i_1^-=m-1$. 
    \item If $i_3-i_2=2$ with $i_2=m-2$ and $i_3 = m$, then our argument is the same as when $i_1=m$ and $i_1^-=m-2$.
\end{itemize}
 Finally, we consider when $i_3-i_2=2$ and $i_3 \neq m$. In this case, we can continue to iterate the same algorithm until we reach the same contradictions.




\end{proof}

\section{Further Directions}\label{Sec: FD} The primary approach of this paper utilizes the
invertibility of the operator $RT_r$ on the cable space $C_{p,q}$ as well as a polynomial bound on
its operator norm. The same methodology could apply in the context for the operator $RT_r$ for other
cable spaces.

Although the technique may apply in the case when the cable space has positive simplicial volume, a
more natural approach would be to generalize our argument to other cable spaces with simplicial
volume zero. For example, we may consider the manifold defined as follows. Let $N=\Sigma_{g,2}
\times S^1$ where $\Sigma_{g,2}$ is a orientable compact genus $g$ surface with $2$ boundary
components. Now let $\{x_i\}_{i=1}^m \subset \Sigma_{g,2}$ such that $\{ x_i\}_{i=1}^m \times S^1$
is a collection of $m$ vertical fibers in $N$. We define the Seifert cable space $C\left( s_1,\dots
s_m\right)$ where $s_i = \frac{p_i}{q_i} \in \mathbb{Q}$ to be the manifold obtained by performing
$s_i$-Dehn surgery along the $i$-th vertical fiber in $N$.

If an analogous result to Theorem \ref{RTinvertible} holds for the Seifert cable space $C\left(
s_1,\dots s_m\right)$, the corresponding Theorem \ref{mainthm} will also follow as well as its
applications to Conjecture \ref{GenCYvol}. Similar to the constraint of Theorem \ref{RTinvertible}
where $r$ and $q$ must be coprime, the analogous result for the Seifert cable space may require a
related caveat. This leads to the following concluding question.

\begin{ques}
Is $RT_r(C\left( s_1,\dots s_m\right))$ invertible when $r$ is sufficiently large and coprime to
every $q_i$?
\end{ques}

\bibliographystyle{abbrv}

\bibliography{biblio}

\Addresses

\end{document}